\newtheorem{lemma}{Proposition}
\newtheorem{corollary}[lemma]{Corollary}
\begin{document}

\title[Two energy-preserving finite-difference schemes]{Two finite-difference schemes that preserve the dissipation of energy in a system of modified wave equations}

\author{J. E. Mac\'{\i}as-D\'{\i}az}
\address{Departamento de Matem\'{a}ticas y F\'{\i}sica, Universidad Aut\'{o}noma de Aguascalientes, Avenida Universidad 940, Ciudad Universitaria, Aguascalientes, Ags. 20100, Mexico}
\email{jemacias@correo.uaa.mx}

\author{S. Jerez-Galiano}
\address[CIMAT]{Centro de Investigaci\'{o}n en Matem\'{a}ticas, A. C., Jalisco S/N, Colonia Valenciana, Guanajuato, Gto. 36240, Mexico}
\email{jerez@cimat.mx}

\subjclass[2010]{(PACS) 45.10.-b; 05.45.-a; 02.30.Hq}
\keywords{finite-difference schemes; sine-Gordon equation; discrete Josephson-junction arrays; nonlinear supratransmission; stability analysis}

\begin{abstract}
In this work, we present two numerical methods to approximate solutions of systems of dissipative sine-Gordon equations that arise in the study of one-dimensional, semi-infinite arrays of Josephson junctions coupled through superconducting wires. Also, we present schemes for the total energy of such systems in association with the finite-difference schemes used to approximate the solutions. The proposed methods are conditionally stable techniques that yield consistent approximations not only in the domains of the solution and the total energy, but also in the approximation to the rate of change of energy with respect to time. The methods are employed in the estimation of the threshold at which nonlinear supratransmission takes place, in the presence of parameters such as internal and external damping, generalized mass, and generalized Josephson current. Our results are qualitatively in agreement with the corresponding problem in mechanical chains of coupled oscillators, under the presence of the same parameters.
\end{abstract}

\maketitle

\section{Introduction}

The well-known sine-Gordon equation is an important differential equation in physics due to its multiple applications. For example, it appears in the study of long Josephson junctions between superconductors \cite{Solitons}, in the investigation of discrete arrays of Josephson junctions coupled through superconducting wires \cite {Chevrieux2}, in the modeling of the motion of a damped string in a non-Hookean medium or, more generally, in the description of the motion of rigid pendula attached to a stretched wire \cite {Barone}, and as a model for rapidly rotating baroclinic fluids \cite {Gibbon}. Modified versions of the sine-Gordon equation often appear in the form of Klein-Gordon equations, in problems such as the study of fluxons in Josephson transmission lines \cite{Lomdahl}, in the statistical mechanics of nonlinear coherent structures such as solitary waves (see \cite{Makhankov} pp. 298--309), in the study of Alfven waves in nonuniform media \cite {Musielak}, or as Taylor approximations to problems that involve the sine-Gordon model.

The continuous form of the sine-Gordon equation possesses many interesting analytical properties. For example, the sine-Gordon equation is an integrable equation, a characteristic which does not share, for instance, with the nonlinear Klein-Gordon equation. The existence of traveling-wave solutions, soliton solutions and, moreover, nonlinear intrinsic modes (called \emph {moving breathers}), are also well-known facts associated to this equation \cite {handbook}. In summary, many interesting properties are already available for the analysis of continuous sine-Gordon systems; however, the theory underneath the foundations of this differential equation is rather far from being completely understood, whence it follows that the determination of novel results related to continuous sine-Gordon media, as well as the study of new physical applications, are highly transited highways in the literature of applied mathematics and physics.

From a numerical perspective, the study of the classical, continuous sine-Gordon equation has been a popular source of interest since its inception \cite {Perring}. For instance, Josephson tunnel junctions were studied numerically via the sine-Gordon equation in \cite{Lomdahl}, while the numerical study of wave transmission in long Josephson structures subject to harmonic driving was begun by Olsen and Samuelsen \cite{Olsen}. On the other hand, the $(1 + 1)$-dimensional Langevin equation was solved through numerical simulations in \cite{AlexHabib}. Later on, Strauss and V\'{a}z\-quez designed a numerical technique to approximate radially symmetric solutions of conservative Klein-Gordon equations \cite{StraussVazquez}, one of the most important features of their numerical method being that the discrete energy associated with the differential equation is conserved.

It must be mentioned that, after Strauss and V\'{a}zquez's pioneering work, the development of numerical techniques with properties of invariance in some physical domains became a very fruitful topic of research. Particularly, many finite-difference schemes (implicit and explicit) with properties of invariance in the energy domain were proposed for classes of nonlinear, conservative Klein-Gordon equations \cite {StraussVazquez,Ben,Fei,Li}. However, it is worth mentioning that the nonconservative case has been left aside, and that its study has been recently initiated \cite {Macias-Puri,Macias-Supra,Macias2D,Macias-CNSNS}, motivated by the study of the nonlinear phenomenon of supratransmission of energy in certain dissipative systems, where the sine-Gordon equation clearly dominates the theoretical \cite {Chevrieux2,Geniet-Leon,Geniet-Leon2} and the applied \cite{Macias-Signals,Macias-Signals2} scenarios.

This article presents two conditionally stable, finite-difference schemes with properties of consistency in the domains of the solution and the energy. The numerical techniques are used in the investigation of the process of nonlinear supratransmission in discrete arrays of Josephson junctions subject to harmonic perturbations in one end, for which the methods presented here seem to be natural choices in the study of the problem.

Section \ref {Sec2} of this paper introduces the mathematical model under study and the energy expressions employed in the analysis of supratransmission. Section \ref {Sec3} presents two numerical schemes associated to our problem and some practical observations on the implementation of the methods. The next section establishes the most important numerical properties of the computational techniques; here, we show that the proposed methods provide consistent approximations in the energy domain --- a highly desirable characteristic in the analysis of the process of supratransmission ---, and establish stability properties. In Section \ref {Sec5}, our techniques are applied to the determination of the supratransmission threshold in the presence of several parameters. Finally, the article closes with a section of concluding remarks and directions of further research.

\section{Preliminaries\label{Sec2}}

\subsection{Mathematical model}

The physical motivation of the present work is the study of discrete, linear arrays of parallel Josephson junctions coupled through superconducting wires, subject to harmonic driving on one end. The model considers the effects of internal and external damping, relativistic mass, and a generalized Josephson current. More concretely, we consider a sequence of $N$ coupled Josephson junctions initially at rest, and study the associated initial-value problem
\begin{equation}
\begin{array}{c}
\begin{array}{rcl}
\displaystyle {\ddot {u} _1 - \left( c ^2 + \beta D _t \right) \Delta ^2 _x u _1 + \gamma \dot {u} _1 + \mathfrak {m} ^2 u _1 + \sin u _1} & = & J + \phi (t), \\
\displaystyle {\ddot {u} _n - \left( c ^2 + \beta D _t \right) \Delta ^2 _x u _n + \gamma \dot {u} _n + \mathfrak {m} ^2 u _n + \sin u _n} & = & J, \ \ 1 < n < N, \\ 
\displaystyle {\ddot {u} _N + \left( c ^2 + \beta D _t \right) \Delta ^2 _x u _{N - 1} + \gamma \dot {u} _N + \mathfrak {m} ^2 u _N + \sin u _N} & = & J -I, 
\end{array}\\
		\begin{array}{rl}
        \begin{array}{l}
            \textrm {s.\ t.}
        \end{array}
        \left\{
        \begin{array}{ll}
            u _n (0) = 0, & 1 \leq n \leq N, \\
            \displaystyle {\frac {d u _n} {d t} (0) = 0}, & 1 \leq n \leq N. \\
        \end{array}\right.
    \end{array}
\end{array}\label{Eqn:Main1}
\end{equation}
Here, $D _t$ is the derivative operator with respect to time $t$, 
\begin{equation}
\Delta _x ^2 u _n = (u _{n + 1} - u _n) \chi _{[1 , N]} (n + 1) + (u _{n - 1} - u _n) \chi _{[1 , N]} (n - 1),
\end{equation}
and $\chi _{[1 , N]} (n)$ is the characteristic function on the set $[1 , N]$ evaluated at $n$, which is equal to $1$ if $n$ belongs to $[1 , N]$, and equal to $0$ otherwise.

Notationally, $c$ is a nonnegative constant called the \emph {coupling coefficient}, and $\beta$ and $\gamma$ are also nonnegative constants called the \emph {internal} and \emph {external damping coefficients}, respectively. The constant $\mathfrak {m}$ --- called the \emph {mass} of the system --- is a real or pure-imaginary complex number that has been included in the system of equations in order to suggest possible, further applications of our technique to phi-four theories and super symmetry \cite {Kudriatsev}. Meantime, the function $\phi$ is called the \emph {input intensity function}, and it is a harmonic disturbance irradiating at a frequency in the forbidden band-gap of the system, that takes the concrete form $\phi (t) = A \sin (\Omega t)$. Finally, the constants $J$ and $I$ are called, respectively, the \emph {generalized Josephson current} and the \emph {output current intensity} of the system.

It has been established \cite {Macias-Signals3} that the inclusion of two convenient nodes, one located at the beginning and the other at the end of the array of the $N$ Josephson junctions in (\ref {Eqn:Main1}), transforms our initial-value problem into the equivalent initial-value problem with boundary data
\begin{equation}
\begin{array}{c}
\displaystyle {\ddot {u} _n - \left( c ^2 + \beta D _t \right) \Delta _x ^2 u _n + \gamma _n \dot {u} _n + \mathfrak {m} ^2 u _n + \sin u _n} = J, \ \ \ 1 \leq n \leq N, \\ 
		\begin{array}{rl}
        \begin{array}{l}
            \textrm {s.\ t.}
        \end{array}
        \left\{
        \begin{array}{ll}
            u _n (0) = 0, \quad \dot {u} _n (0) = 0, & 1 \leq n \leq N, \\
            c ^2 (u _0 - u _1) + \beta (\dot {u} _0 - \dot {u} _1) = \phi, & t \geq 0, \\
            u _{N + 1} - u _N = 0, & t \geq 0,
        \end{array}\right.
    \end{array}
\end{array}\label{Eqn:Main2}
\end{equation}
where $\gamma _n = \gamma$ for every $n < N$, and $\gamma _N = \gamma + 1 / R$. The number $R$ is called the \emph {output reading resistance} of the system, and it is related to the output current intensity through Ohm's law: $I = \dot {u} _N / R$.

Let us consider the case $\beta = \gamma = J = 0$. Then, the linear dispersion relation associated with the linearized form of the differential equations in (\ref {Eqn:Main2}) is given by $\omega ^2 (k) = \mathfrak {m} ^2 + 1 + 2 c ^2 (1 - \cos k)$. It is clear that $1 - \cos k \geq 0$ for every $k$, so that $\omega ^2 (k) \geq \mathfrak {m} ^2 + 1$. It follows that the forbidden band-gap is provided by the inequality $\Omega < \sqrt {\mathfrak {m} ^2 + 1}$.

\subsection{Energy expressions}

It is worth noticing that the Hamiltonian of the $n$-site in the conservative version of (\ref {Eqn:Main2}) is given by
\begin{equation}
H _n = \frac {1} {2} \left[ \dot {u} _n ^2 + c ^2 (u _{n + 1} - u _n) ^2 + \mathfrak {m} ^2 u _n ^2 \right] + V (u _n) - J u _n,
\end{equation}
with $V (u _n)$ being the potential function for the classical sine-Gordon equation evaluated at $u _n$, namely, $V (u _n) = 1 - \cos u _n$. The inclusion of the potential between the coupling of the first two sites in the chain of Josephson junctions results in the following expression for the total energy of the system:
\begin{equation}
E = \sum _{n = 1} ^N H _n + \frac {c ^2} {2} (u _1 - u _0) ^2.
\end{equation}

As it is customary in the analysis of supratransmission, it is convenient to consider semi-infinite, discrete arrays of nodes, in order to determine the critical threshold at which the system under study starts to propagate wave signals in the form of localized modes. To do that, one may eventually need to consider infinite sequences $(u _n) _{n = 1} ^\infty$ of real functions satisfying the differential equations of problem (\ref {Eqn:Main2}), for which the sequences of their corresponding derivatives are members of $\ell ^2 (\mathbb {R})$. More precisely, one needs to consider sequences $(u _n) _{n = 1} ^\infty$ for which $\sum _{n = 1} ^\infty (\dot {u} _n) ^2$ is convergent at any time. This type of sequences will be called \emph {square-summable}. 

\begin{lemma}[Mac\'{\i}as-D\'{\i}az and Puri \cite {Macias-Supra}]
Let $(u _n) _{n = 1} ^N$ be a solution of the system of differential equations in (\ref {Eqn:Main2}). Then, the instantaneous rate of change of the total energy of the system with respect to time is given by
\begin{equation}
\frac {d E} {d t} = c ^2 (u _0 - u _1) \dot {u} _0 - \beta \left[ \sum _{n = 1} ^N (\dot {u} _n - \dot {u} _{n - 1}) ^2 + (\dot {u} _1 - \dot {u} _0) \dot {u} _0 \right] - \gamma \sum _{n = 1} ^N \dot {u} _n ^2. \label{Eqn:Hola}
\end{equation}
Moreover, if $(u _n) _{n = 1} ^\infty$ is a square-summable solution of an infinite class of coupled differential equations described by (\ref {Eqn:Main2}), then the instantaneous rate of change of total energy of the system is obtained by taking the limit when $N$ tends to infinity in the right-hand side of (\ref {Eqn:Hola}). \qed
\end{lemma}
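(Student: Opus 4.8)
The plan is to compute $dE/dt$ by direct differentiation and then eliminate the accelerations $\ddot u_n$ using the field equations, after which the expression should collapse under discrete summation by parts. Writing $E=\sum_{n=1}^N H_n+\tfrac{c^2}{2}(u_1-u_0)^2$ and differentiating, each site energy contributes
\[
\dot H_n = \dot u_n\ddot u_n + c^2(u_{n+1}-u_n)(\dot u_{n+1}-\dot u_n) + \mathfrak m^2 u_n\dot u_n + (\sin u_n)\dot u_n - J\dot u_n,
\]
since $V'(u_n)=\sin u_n$, while the extra coupling term contributes $c^2(u_1-u_0)(\dot u_1-\dot u_0)$.

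Next I would substitute $\ddot u_n=(c^2+\beta D_t)\Delta_x^2 u_n-\gamma_n\dot u_n-\mathfrak m^2 u_n-\sin u_n+J$ from (\ref{Eqn:Main2}) into $\dot u_n\ddot u_n$. The decisive simplification is that the mass contribution $\mathfrak m^2 u_n\dot u_n$, the nonlinear contribution $(\sin u_n)\dot u_n$, and the forcing contribution $J\dot u_n$ enter with opposite signs and cancel identically, leaving
\[
\dot H_n = \dot u_n(c^2+\beta D_t)\Delta_x^2 u_n - \gamma_n\dot u_n^2 + c^2(u_{n+1}-u_n)(\dot u_{n+1}-\dot u_n).
\]
Summing over $n$, the external friction is carried through without cancellation and supplies the dissipative group $-\gamma\sum_{n=1}^N\dot u_n^2$, so the whole problem reduces to reconciling the two discrete-Laplacian sums, one built from $\Delta_x^2 u_n$ and one from the first difference $(u_{n+1}-u_n)$, together with their $\beta$-counterparts (using that $D_t$ commutes with $\Delta_x^2$).

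The central computation is an Abel summation applied to $\sum_{n=1}^N\dot u_n\Delta_x^2 u_n$, expanding $\Delta_x^2 u_n$ as the second difference $(u_{n+1}-u_n)-(u_n-u_{n-1})$ and reindexing, and the analogous manipulation of the $\beta$-sum with $u$ replaced by $\dot u$. Here the boundary data are essential: $u_{N+1}-u_N=0$ and its time-derivative $\dot u_{N+1}=\dot u_N$ annihilate every contribution at the right end, so only the input end $n=1$ survives. For the $c^2$ group the interior sum produced by the rearrangement exactly cancels $c^2\sum_{n}(u_{n+1}-u_n)(\dot u_{n+1}-\dot u_n)$, and the surviving $n=1$ remainder $-c^2\dot u_1(u_1-u_0)$ combines with $c^2(u_1-u_0)(\dot u_1-\dot u_0)$ to give precisely $c^2(u_0-u_1)\dot u_0$. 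For the $\beta$ group the same rearrangement assembles the squared backward differences $\sum_{n=1}^N(\dot u_n-\dot u_{n-1})^2$ plus the boundary term $(\dot u_1-\dot u_0)\dot u_0$, reproducing the bracket in (\ref{Eqn:Hola}). I expect the only delicate point to be this boundary bookkeeping, in particular verifying the elementary identity $-\beta\dot u_1(\dot u_1-\dot u_0)=-\beta[(\dot u_1-\dot u_0)^2+(\dot u_1-\dot u_0)\dot u_0]$ that recombines the $n=1$ term into the full sum; the interior cancellations are automatic.

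Finally, for a square-summable solution of the infinite system I would pass to the limit $N\to\infty$. Convergence of $\sum_{n=1}^\infty\dot u_n^2$ forces $\dot u_n\to 0$ and, via $\sum(\dot u_n-\dot u_{n-1})^2\le 2\sum\dot u_n^2+2\sum\dot u_{n-1}^2$, also controls the difference sums, so every group on the right-hand side of (\ref{Eqn:Hola}) has a finite limit; the same decay makes the far-field flux term vanish, which is what replaces the right-end boundary condition used in the finite case. It then remains only to justify that differentiation of the convergent energy series commutes with the limit in $N$, after which the stated identity follows for the infinite array as the limit of its finite truncations.
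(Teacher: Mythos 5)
Your computation is correct, but there is nothing in the paper to compare it against line by line: the paper states this proposition with an immediate \qed, importing it from the cited reference \cite{Macias-Supra} of Mac\'{\i}as-D\'{\i}az and Puri rather than proving it. Your route --- differentiate $E$, eliminate $\ddot u_n$ via the equations of motion so that the $\mathfrak{m}^2$, $\sin u_n$ and $J$ contributions cancel, then sum by parts using $u_{N+1}-u_N=0$ and its consequence $\dot u_{N+1}=\dot u_N$ to annihilate the right end --- is the standard energy-balance derivation and is surely the argument of the cited source. Your boundary bookkeeping is exactly right: the surviving $n=1$ remainder $-c^2\dot u_1(u_1-u_0)$ plus $c^2(u_1-u_0)(\dot u_1-\dot u_0)$ gives $c^2(u_0-u_1)\dot u_0$, and the identity $-\dot u_1(\dot u_1-\dot u_0)=-[(\dot u_1-\dot u_0)^2+(\dot u_1-\dot u_0)\dot u_0]$ recombines the $n=1$ term into the bracket of (\ref{Eqn:Hola}).

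Two caveats are worth recording. First, the equations in (\ref{Eqn:Main2}) carry $\gamma_n$ with $\gamma_N=\gamma+1/R$; your substitution silently sets $\gamma_N=\gamma$, so strictly the derivation produces an extra dissipative term $-\dot u_N^2/R$ unless $1/R=0$. This looseness is inherited from the proposition as stated, but your proof should say it is assuming $\gamma_n\equiv\gamma$. Second, in the semi-infinite case, square-summability of the velocities does give $\dot u_n\to 0$ and convergence of the $\beta$- and $\gamma$-sums, but the vanishing of the far-field flux $c^2\dot u_N(u_{N+1}-u_N)$ also requires control of $u_{N+1}-u_N$ (obtainable, e.g., from $u_{n+1}-u_n=\int_0^t(\dot u_{n+1}-\dot u_n)\,ds$ and the zero initial data), and the interchange of $d/dt$ with the infinite sum is asserted rather than justified. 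Since the paper itself treats the infinite case purely formally, this does not put you below the source's level of rigor, but it is the one place where your argument remains a sketch.
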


\begin{corollary} [Geniet and Leon \cite{Geniet-Leon2}]
Let $(u _n) _{n = 1} ^N$ be a solution of the undamped version of (\ref {Eqn:Main2}). Then, the total energy of the system is given by
\begin{equation}
E (t) = - c ^2 \int _0 ^t \dot {u} _0 (s) (u _1 (s) - u _0 (s)) \ ds.
\end{equation} \qed
\end{corollary}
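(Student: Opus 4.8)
The plan is to specialize the Proposition to the undamped setting and then integrate once in time. Taking the undamped version of (\ref{Eqn:Main2}) means setting $\beta = \gamma = 0$, which annihilates both the bracketed internal-damping sum and the external-damping sum in (\ref{Eqn:Hola}); the instantaneous rate of change of energy then collapses to the single boundary term $\frac{dE}{dt} = c^2 (u_0 - u_1) \dot{u}_0 = -c^2 \dot{u}_0 (u_1 - u_0)$. First I would integrate this identity from $0$ to $t$ by the fundamental theorem of calculus, obtaining $E(t) - E(0) = -c^2 \int_0^t \dot{u}_0(s)(u_1(s) - u_0(s))\,ds$, which already has the claimed form.

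It then remains only to verify that $E(0) = 0$, and this is where the initial and boundary data of (\ref{Eqn:Main2}) enter. Evaluating the total energy $E = \sum_{n=1}^N H_n + \frac{c^2}{2}(u_1 - u_0)^2$ at $t = 0$, I would use the initial conditions $u_n(0) = 0$ and $\dot{u}_n(0) = 0$ for $1 \leq n \leq N$ to kill the kinetic, mass, and potential contributions inside each $H_n$; in particular $V(0) = 1 - \cos 0 = 0$, and the generalized-current term $-J u_n$ also vanishes at $t = 0$.

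The only genuinely non-automatic step is checking that the two coupling terms at the ends of the chain vanish, since these involve the auxiliary nodes $u_0$ and $u_{N+1}$ rather than the interior data. For the right end I would invoke the boundary condition $u_{N+1} - u_N = 0$, which forces $u_{N+1}(0) = u_N(0) = 0$ and hence eliminates the coupling term in $H_N$. For the left end, with $\beta = 0$ the boundary condition reduces to $c^2(u_0 - u_1) = \phi$, and since $\phi(t) = A \sin(\Omega t)$ satisfies $\phi(0) = 0$, I obtain $u_0(0) = u_1(0) = 0$, so the extra term $\frac{c^2}{2}(u_1 - u_0)^2$ vanishes at $t = 0$ as well. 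Combining these observations gives $E(0) = 0$, and the stated formula follows immediately from the integrated identity. I expect the main (though minor) obstacle to be precisely this endpoint bookkeeping, since one must be careful to use the correct undamped form of the left boundary condition to pin down $u_0(0)$, rather than naively reading off only the initial data prescribed for the interior nodes.
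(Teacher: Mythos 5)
Your proposal is correct and follows exactly the route the paper intends (the paper gives no written proof, presenting the result as an immediate corollary of the preceding proposition): set $\beta = \gamma = 0$ in (\ref{Eqn:Hola}), integrate in time, and check $E(0) = 0$ from the zero initial data. Your endpoint bookkeeping --- using $u_{N+1} - u_N = 0$ and the undamped left boundary condition $c^2(u_0 - u_1) = \phi$ with $\phi(0) = 0$ to kill the two coupling terms --- is precisely the detail needed to justify $E(0) = 0$, and it is handled correctly.
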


\section{Computational techniques\label {Sec3}}

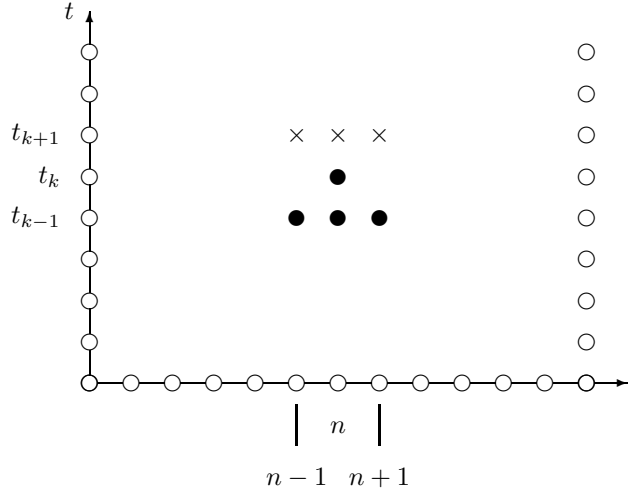
\begin{figure}
\begin{center}
\setlength{\unitlength}{.55mm}
\begin{picture}(140,100)
\put(10,12){\line(0,1){6}} %
\put(10,22){\line(0,1){6}} %
\put(10,32){\line(0,1){6}} %
\put(10,42){\line(0,1){6}} %
\put(10,52){\line(0,1){6}} %
\put(10,62){\line(0,1){6}} %
\put(10,72){\line(0,1){6}} %
\put(10,82){\line(0,1){6}} %
\put(10,92){\vector(0,1){8}} %
\put(5,100){\makebox(0,0)[5]{$t$}} %
\put(12,10){\line(1,0){6}} %
\put(22,10){\line(1,0){6}} %
\put(32,10){\line(1,0){6}} %
\put(42,10){\line(1,0){6}} %
\put(52,10){\line(1,0){6}} %
\put(62,10){\line(1,0){6}} %
\put(72,10){\line(1,0){6}} %
\put(82,10){\line(1,0){6}} %
\put(92,10){\line(1,0){6}} %
\put(102,10){\line(1,0){6}} %
\put(112,10){\line(1,0){6}} %
\put(122,10){\line(1,0){6}} %
\put(132,10){\vector(1,0){8}} %
\put(10,10){\circle{4}} %
\put(10,20){\circle{4}} %
\put(10,30){\circle{4}} %
\put(10,40){\circle{4}} %
\put(10,50){\circle{4}} %
\put(10,60){\circle{4}} %
\put(10,70){\circle{4}} %
\put(10,80){\circle{4}} %
\put(10,90){\circle{4}} %
\put(130,10){\circle{4}} %
\put(130,20){\circle{4}} %
\put(130,30){\circle{4}} %
\put(130,40){\circle{4}} %
\put(130,50){\circle{4}} %
\put(130,60){\circle{4}} %
\put(130,70){\circle{4}} %
\put(130,80){\circle{4}} %
\put(130,90){\circle{4}} %
\put(10,10){\circle{4}} %
\put(20,10){\circle{4}} %
\put(30,10){\circle{4}} %
\put(40,10){\circle{4}} %
\put(50,10){\circle{4}} %
\put(60,10){\circle{4}} %
\put(70,10){\circle{4}} %
\put(80,10){\circle{4}} %
\put(90,10){\circle{4}} %
\put(100,10){\circle{4}} %
\put(110,10){\circle{4}} %
\put(120,10){\circle{4}} %
\put(130,10){\circle{4}} %
\put(70,-2){\makebox(0,0)[b]{$n$}} %
\put(60,-15){\makebox(0,0)[b]{$n - 1$}} %
\put(60,-5){\line(0,1){10}} %
\put(80,-5){\line(0,1){10}} %
\put(80,-15){\makebox(0,0)[b]{$n + 1$}} %
\put(3,60){\makebox(0,0)[r]{$t _k$}} %
\put(3,70){\makebox(0,0)[r]{$t _{k + 1}$}} %
\put(3,50){\makebox(0,0)[r]{$t _{k - 1}$}} %
\put(70,60){\circle*{4}} %
\put(70,50){\circle*{4}} %
\put(60,50){\circle*{4}} %
\put(80,50){\circle*{4}} %
\put(60,70){\makebox(0,0){$\times$}} %
\put(70,70){\makebox(0,0){$\times$}} %
\put(80,70){\makebox(0,0){$\times$}} %
\end{picture} 
\end{center}\ \smallskip
\caption{Forward-difference stencil for the approximation to the $n$th differential equation in (\ref {Eqn:Main2}) at time $t _k$, using scheme (\ref {Eqn:DiscrMain}). The black circles represent known approximations to the actual solutions at times $t _{k - 1}$ and $t _k$ in scheme (\ref {Eqn:DiscrMain}), and the crosses denote the unknown approximations at time $t _{k + 1}$.  \label{Fig:Scheme}}
\end{figure}

\subsection{A finite-difference scheme}

Consider a class of $N$ differential equations satisfying (\ref {Eqn:Main2}), and let us take a regular partition $0 = t _0 < t _1 < \dots < t _M = T$ of a time interval $[0 , T]$, with time step equal to $\Delta t$. For each $k = 0 , 1 , \dots , M$, let us represent by $u ^k _n$ the approximate solution to our problem on the $n$th lattice site at time $t _k$, and the actual value of $\phi$ at the $k$th time step by $\phi _k$. Let us convey now that 
\begin{equation}
\begin{array}{rcl}
\delta _t u _n ^k & = & u _n ^{k + 1} - u _n ^{k - 1},\\ 
\delta ^2 _t u _n ^k & = & u _n ^{k + 1} - 2 u _n ^k + u _n ^{k - 1},\\ 
\delta ^2 _x u _n ^k & = & u _{n + 1} ^k - 2 u _n ^k + u _{n - 1} ^k,\\
\bar {\delta} u _n ^k & = & u _n ^{k + 1} + u _n ^{k - 1}.
\end{array}
\end{equation}
Then, problem (\ref {Eqn:Main2}) takes the discrete form
\begin{equation}
\begin{array}{c}
\begin{array}{rl}\left.
\begin{array}{rcl}
\displaystyle {\frac {\delta ^2 _t u _n ^k} {(\Delta t) ^2} - \left( \frac {c ^2} {2} \bar {\delta} + \frac {\beta} {2 \Delta t} \delta _t \right) \delta ^2 _x u _n ^k + \frac {\gamma} {2 \Delta t} \delta _t u _n ^k +} \quad & & \\
\displaystyle {\frac {\mathfrak {m} ^2} {2} \bar {\delta} u _n ^k + \frac {V (u _n ^{k + 1}) - V (u _n ^{k - 1})} {u _n ^{k + 1} - u _n ^{k - 1}}} & = & J,
\end{array} \right\} &
\begin{array}{l}
1 \leq n \leq N, \\
1 \leq k \leq M,
\end{array}
\end{array} \\
\textrm {s.\ t.} \left\{ \begin{array}{ll}
u _n ^0 = 0, \quad u _n ^1 = 0, & 1 \leq n \leq N, \\
c ^2 \bar {\delta} (u _0 ^k - u _1 ^k) + \beta \delta _t (u _0 ^k - u _1 ^k) / \Delta t = 2 \phi _k, & 1 \leq k \leq M, \\
u _{N + 1} ^k - u _N ^k = 0, & 1 \leq k \leq M.
\end{array} \right.
\end{array}
\label{Eqn:DiscrMain}
\end{equation}
The forward-difference stencil for this method is presented in Fig. \ref {Fig:Scheme}.

In the unbounded situation (that is, when we consider a semi-infinite system of coupled Josephson junctions), we choose a large system of $N$ Josephson junctions following (\ref {Eqn:Main2}), in which the external damping coefficient includes both the effect of external damping and a simulation of an absorbing boundary slowly increasing in magnitude on the last $N - N _0$ oscillators. Explicitly, we let $\gamma$ be the sum of the actual value of the external damping coefficient and the function 
\begin{equation}
\gamma ^{\prime} (n) = 0.5 \left[ 1 + \tanh \left( \displaystyle {\frac {2 n - N _0 + N} {6}} \right) \right],
\end{equation}
where $N _0 = 50$ and $N \geq 200$ for our computations.

It is important to posses discrete schemes to approximate the local energy density and the total energy of the problem under study. We associate the following expression to finite-difference scheme (\ref {Eqn:DiscrMain}), in order to approximate the value of the Hamiltonian of the $n$th site at the $k$th time:
\begin{equation}
\begin{array}{rcl}
H _n ^k & = & \displaystyle {\frac {1} {2} \left( \frac {u _n ^{k + 1} - u _n ^k} {\Delta t} \right) ^2 + \frac {c ^2} {8} \left[ \left( u _{n + 1} ^{k + 1} - u _n ^{k + 1} \right) ^2 \right.} \\
  & & \displaystyle {\left. + \left( u _{n - 1} ^{k + 1} - u _n ^{k + 1} \right) ^2 + \left( u _{n + 1} ^k - u _n ^k \right) ^2 + \left( u _{n - 1} ^k - u _n ^k \right) ^2 \right]} \\
 & & \quad \displaystyle{ + \frac {\mathfrak {m} ^2} {2} \frac {(u _n ^{k + 1}) ^2 + (u _n ^k) ^2} {2} + \frac {V (u _n ^{k + 1}) + V (u _n ^k)} {2} - J \frac {u _n ^{k + 1} + u _n ^k}{2}}.
\end{array} \label{Eqn:Hamiltonian0}
\end{equation}
Moreover, the total energy of the system is estimated by
\begin{equation}
E _k = \displaystyle {\sum _{n = 1} ^ N H _n ^k + \frac {c ^2} {4} \left[ \frac {\left( u _1 ^{k + 1} - u _0 ^{k + 1} \right) ^2 + \left( u _1 ^k - u _0 ^k \right) ^2} {2} \right].}
\label{Eqn:DEner0}
\end{equation}

\subsection{A second finite-difference scheme}

Let $\delta ^2 u _n ^k = u _n ^{k + 1} + 2 u _n ^k + u _n ^{k - 1}$. A second, finite-difference scheme to approximate solutions to problem (\ref {Eqn:Main2}) is presented next. Its forward-difference stencil is depicted in Fig. \ref {Fig:Scheme2}:
\begin{equation}
\begin{array}{c}
\begin{array}{rl}\left.
\begin{array}{rcl}
\displaystyle {\frac {\delta ^2 _t u _n ^k} {(\Delta t) ^2} - \left( \frac {c ^2} {4} {\delta ^2} + \frac {\beta} {2 \Delta t} \delta _t \right) \delta ^2 _x u _n ^k + \frac {\gamma} {2 \Delta t} \delta _t u _n ^k +} \quad & & \\
\displaystyle {\frac {\mathfrak {m} ^2} {2} \bar {\delta} u _n ^k + \frac {V (u _n ^{k + 1}) - V (u _n ^{k - 1})} {u _n ^{k + 1} - u _n ^{k - 1}}} - J & = & 0,
\end{array}\right\} & 
\begin{array}{l}
1 \leq n \leq N, \\
1 \leq k \leq M,
\end{array}
\end{array} \\
\textrm {s.\ t.} \left\{ \begin{array}{ll}
u _n ^0 = 0, \quad u _n ^1 = 0, & 1 \leq n \leq N, \\
c ^2 \bar {\delta} (u _0 ^k - u _1 ^k) + \beta \delta _t (u _0 ^k - u _1 ^k) / \Delta t = 2 \phi _k, & 1 \leq k \leq M, \\
u _{N + 1} ^k - u _N ^k = 0, & 1 \leq k \leq M.
\end{array} \right.
\end{array}
\label{Eqn:DiscrMain2}
\end{equation}

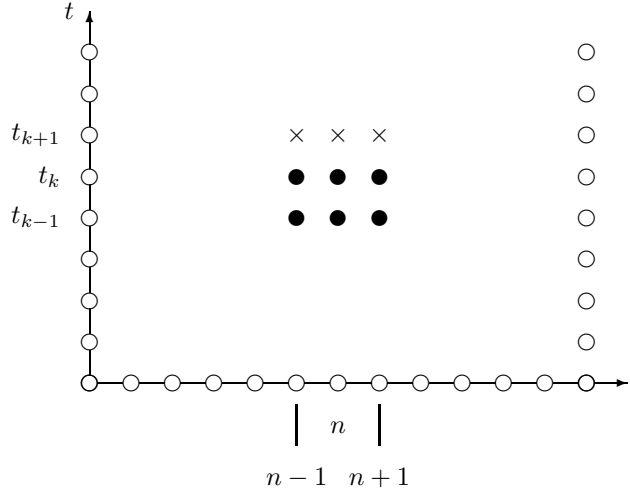
\begin{figure}
\begin{center}
\setlength{\unitlength}{.55mm}
\begin{picture}(140,100)
\put(10,12){\line(0,1){6}} %
\put(10,22){\line(0,1){6}} %
\put(10,32){\line(0,1){6}} %
\put(10,42){\line(0,1){6}} %
\put(10,52){\line(0,1){6}} %
\put(10,62){\line(0,1){6}} %
\put(10,72){\line(0,1){6}} %
\put(10,82){\line(0,1){6}} %
\put(10,92){\vector(0,1){8}} %
\put(5,100){\makebox(0,0)[5]{$t$}} %
\put(12,10){\line(1,0){6}} %
\put(22,10){\line(1,0){6}} %
\put(32,10){\line(1,0){6}} %
\put(42,10){\line(1,0){6}} %
\put(52,10){\line(1,0){6}} %
\put(62,10){\line(1,0){6}} %
\put(72,10){\line(1,0){6}} %
\put(82,10){\line(1,0){6}} %
\put(92,10){\line(1,0){6}} %
\put(102,10){\line(1,0){6}} %
\put(112,10){\line(1,0){6}} %
\put(122,10){\line(1,0){6}} %
\put(132,10){\vector(1,0){8}} %
\put(10,10){\circle{4}} %
\put(10,20){\circle{4}} %
\put(10,30){\circle{4}} %
\put(10,40){\circle{4}} %
\put(10,50){\circle{4}} %
\put(10,60){\circle{4}} %
\put(10,70){\circle{4}} %
\put(10,80){\circle{4}} %
\put(10,90){\circle{4}} %
\put(130,10){\circle{4}} %
\put(130,20){\circle{4}} %
\put(130,30){\circle{4}} %
\put(130,40){\circle{4}} %
\put(130,50){\circle{4}} %
\put(130,60){\circle{4}} %
\put(130,70){\circle{4}} %
\put(130,80){\circle{4}} %
\put(130,90){\circle{4}} %
\put(10,10){\circle{4}} %
\put(20,10){\circle{4}} %
\put(30,10){\circle{4}} %
\put(40,10){\circle{4}} %
\put(50,10){\circle{4}} %
\put(60,10){\circle{4}} %
\put(70,10){\circle{4}} %
\put(80,10){\circle{4}} %
\put(90,10){\circle{4}} %
\put(100,10){\circle{4}} %
\put(110,10){\circle{4}} %
\put(120,10){\circle{4}} %
\put(130,10){\circle{4}} %
\put(70,-2){\makebox(0,0)[b]{$n$}} %
\put(60,-15){\makebox(0,0)[b]{$n - 1$}} %
\put(60,-5){\line(0,1){10}} %
\put(80,-5){\line(0,1){10}} %
\put(80,-15){\makebox(0,0)[b]{$n + 1$}} %
\put(3,60){\makebox(0,0)[r]{$t _k$}} %
\put(3,70){\makebox(0,0)[r]{$t _{k + 1}$}} %
\put(3,50){\makebox(0,0)[r]{$t _{k - 1}$}} %
\put(70,60){\circle*{4}} %
\put(80,60){\circle*{4}} %
\put(60,60){\circle*{4}} %
\put(70,50){\circle*{4}} %
\put(60,50){\circle*{4}} %
\put(80,50){\circle*{4}} %
\put(60,70){\makebox(0,0){$\times$}} %
\put(70,70){\makebox(0,0){$\times$}} %
\put(80,70){\makebox(0,0){$\times$}} %
\end{picture} 
\end{center}\ \smallskip
\caption{Forward-difference stencil for the approximation to the $n$th differential equation in (\ref {Eqn:Main2}) at time $t _k$, using scheme (\ref {Eqn:DiscrMain2}). The black circles represent known approximations to the actual solutions at times $t _{k - 1}$ and $t _k$ in scheme (\ref {Eqn:DiscrMain2}), and the crosses denote the unknown approximations at time $t _{k + 1}$.  \label{Fig:Scheme2}}
\end{figure}

It is worth mentioning that finite-difference schemes (\ref {Eqn:DiscrMain}) and (\ref {Eqn:DiscrMain2}) are modified versions of the one presented in \cite {Macias-Supra}, which in turn is a spatially discretized and adapted version of a computational method to approximate radially symmetric solutions of initial-value problems involving a general class of continuous, three-dimensional Klein-Gordon equations \cite {Macias-Puri}. 

For approximations obtained via (\ref {Eqn:DiscrMain2}), the Hamiltonian of the $n$th site of the system will be computed using the expression
\begin{equation}
\begin{array}{rcl}
H _n ^k & = & \displaystyle {\frac {1} {2} \left( \frac {u _n ^{k + 1} - u _n ^k} {\Delta t} \right) ^2 + \frac {c ^2} {4} \left[ \left( \frac {u _{n + 1} ^{k + 1} + u _{n + 1} ^k} {2} - \frac {u _n ^{k + 1} + u _n ^k} {2} \right) ^2 \right.} \\
  & & \displaystyle {\left. + \left( \frac {u _{n - 1} ^{k + 1} + u _{n - 1} ^k} {2} - \frac {u _n ^{k + 1} + u _n ^k} {2} \right) ^2 \right]} \\
 & & \quad \displaystyle {+ \frac {\mathfrak {m} ^2} {2} \frac {(u _n ^{k + 1}) ^2 + (u _n ^k) ^2} {2} + \frac {V (u _n ^{k + 1}) + V (u _n ^k)} {2} - J \frac {u _n ^{k + 1} + u _n ^k}{2}}.
\end{array} \label{Eqn:Hamiltonian}
\end{equation}
With this notation at hand, the total energy of the system will be approximated via
\begin{equation}
E _k = \displaystyle {\sum _{n = 1} ^ N H _n ^k + \frac {c ^2} {4} \left(\frac {u _1 ^{k + 1} + u _1 ^k} {2} - \frac {u _0 ^{k + 1} + u _0 ^k} {2} \right) ^2.}
\label{Eqn:DEner}
\end{equation}

It is important to mention that there are several computational reasons to prefer an implicit scheme over an explicit one, in order to approximate solutions to (\ref {Eqn:Main2}). First of all, it has been noted that some explicit schemes used to approximate solutions to modified versions of our problem have proved to be highly unstable \cite {StraussVazquez}; on the other hand, the use of an implicit scheme seems inevitable in order to approximate the term $D _t \Delta _x ^2 u _n$ with a consistency of order at least $\mathcal {O} (\Delta t) ^2$. 

Also, we must mention that the proposed numerical methods are consistent with respect to the problem under study, nonlinear, and require an application of Newton's method for systems of nonlinear equations, together with an application of Crout's reduction technique for tridiagonal systems \cite {Burden}. 

Finally, it is worthwhile mentioning here that the stopping criterion used for Newton's method in our computations was 
\begin{equation}
\max \{ | u _n ^{k + 1} - u _n ^k | : 1 \leq n \leq N \} < 1 \times 10 ^{- 5}.
\end{equation}

\section{Numerical properties}


In this section, we prove that finite-difference schemes (\ref {Eqn:DiscrMain}) and (\ref {Eqn:DiscrMain2}) posses properties of consistency that make them useful methods in the study of the process of nonlinear supratransmission in chains of coupled Josephson junctions. With this aim in mind, we present here the most important results concerning the energy properties of our methods.

\begin{lemma} 
The discrete rate of change of energy of a sequence $(u _n ^k)$ satisfying finite-difference scheme (\ref {Eqn:DiscrMain}) is given by
\begin{equation}
\begin{array}{rcl}
\displaystyle {\frac {E _k - E _{k - 1}} {\Delta t}} & = & \displaystyle {- \beta \left\{ \sum _{n = 1} ^N \left( \frac {\delta _t u _n ^k - \delta _t u _{n - 1} ^k} {2 \Delta t} \right) ^2 + \left( \frac {\delta _t u _1 ^k - \delta _t u _0 ^k} {2 \Delta t} \right) \frac {\delta _t u_0 ^k} {2 \Delta t} \right\}} \\
 & & \quad \displaystyle {- \gamma \sum _{n = 1} ^N \left( \frac {\delta _t u _n ^k} {2 \Delta t} \right) ^2 + c ^2 \left( \frac {\bar {\delta} u _0 ^k - \bar {\delta} u _1 ^k} {2} \right) \frac {\delta _t u _0 ^k} {2 \Delta t}}.
\end{array}
\end{equation}\label{LemmaChi0}
\end{lemma}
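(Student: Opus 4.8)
The plan is to multiply the $n$th discrete equation in (\ref{Eqn:DiscrMain}) by the centered discrete velocity $v_n = \delta_t u_n^k / (2\Delta t)$ and to sum the resulting $N$ identities over $n = 1, \dots, N$. I expect the conservative terms (inertia, mass, potential and the source $J$) to reassemble exactly into the forward difference $(E_k - E_{k-1})/\Delta t$ of the discrete energy built from (\ref{Eqn:Hamiltonian0})--(\ref{Eqn:DEner0}), while the coupling operator produces a boundary flux and the two damping operators produce the dissipative quadratic terms. This is the discrete counterpart of the continuous identity (\ref{Eqn:Hola}).

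The first step is to verify that the inertial, mass, potential and source terms telescope term by term. Multiplying $\delta_t^2 u_n^k/(\Delta t)^2$ by $v_n$ gives $\tfrac12\big[((u_n^{k+1}-u_n^k)/\Delta t)^2 - ((u_n^k-u_n^{k-1})/\Delta t)^2\big]/\Delta t$, i.e.\ the forward difference of the discrete kinetic energy; likewise $\tfrac{\mathfrak{m}^2}{2}\bar{\delta} u_n^k\cdot v_n$ collapses via $\bar{\delta} u_n^k\,\delta_t u_n^k = (u_n^{k+1})^2-(u_n^{k-1})^2$, and $-J v_n$ collapses the source contribution. The decisive point is the nonlinear term: because the scheme uses the discrete gradient $[V(u_n^{k+1})-V(u_n^{k-1})]/(u_n^{k+1}-u_n^{k-1})$, multiplying by $v_n$ cancels the denominator and yields exactly $[V(u_n^{k+1})-V(u_n^{k-1})]/(2\Delta t)$, the forward difference of the averaged potential in (\ref{Eqn:Hamiltonian0}). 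Summed over $n$, these four contributions equal the forward difference of $\sum_n G_n^k$, where $G_n^k$ denotes all of $H_n^k$ except its coupling part.

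Next I would handle the coupling term $-\tfrac{c^2}{2}\sum_n \bar{\delta}\,\delta_x^2 u_n^k\,v_n$. Since $\bar{\delta}$ and $\delta_x^2$ act on different indices they commute, so I apply discrete summation by parts (Abel's identity) in the spatial index to $\delta_x^2(\bar{\delta} u_n^k)$ tested against $v_n$. The boundary condition $u_{N+1}^k = u_N^k$ makes the outgoing term at $n = N$ vanish, leaving a bulk sum $\tfrac{c^2}{4\Delta t}\sum_{n=0}^{N-1}(\delta_t u_{n+1}^k-\delta_t u_n^k)(\bar{\delta} u_{n+1}^k-\bar{\delta} u_n^k)$ together with a single boundary term at $n=0$ equal to $\tfrac{c^2}{2}(\bar{\delta} u_1^k-\bar{\delta} u_0^k)v_0$. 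To recognize the bulk sum I would first re-express $E_k$: combining the coupling parts of the $H_n^k$ in (\ref{Eqn:Hamiltonian0}) with the extra edge term of (\ref{Eqn:DEner0}) and using $u_{N+1}^k=u_N^k$ collapses the doubled edge sums into the single expression $\tfrac{c^2}{4}\sum_{n=0}^{N-1}\big[(u_{n+1}^{k+1}-u_n^{k+1})^2+(u_{n+1}^k-u_n^k)^2\big]$. Its forward difference is precisely the bulk sum above, so the coupling operator contributes the coupling-energy difference plus the flux $\tfrac{c^2}{2}(\bar{\delta} u_1^k-\bar{\delta} u_0^k)v_0$; after transposing this flux to the other side it becomes $c^2\big(\tfrac{\bar{\delta} u_0^k-\bar{\delta} u_1^k}{2}\big)\tfrac{\delta_t u_0^k}{2\Delta t}$, the stated sign.

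Finally the damping terms: multiplying $\tfrac{\gamma}{2\Delta t}\delta_t u_n^k$ by $v_n$ immediately gives $\gamma v_n^2$, accounting for the external-damping sum. For internal damping I again use summation by parts, now applied to $\delta_x^2$ tested against the sequence $p_n = \delta_t u_n^k$ itself; with $u_{N+1}^k=u_N^k$ (hence $p_{N+1}=p_N$) this yields $\sum_n (\delta_x^2 p_n)\,p_n = -\sum_{n=1}^N (p_n-p_{n-1})^2-(p_1-p_0)p_0$, which after inserting the prefactor $-\beta/(4(\Delta t)^2)$ and transposing produces exactly the bracketed $\beta$-term. Collecting the conservative differences into $(E_k-E_{k-1})/\Delta t$ and isolating the flux and dissipation terms gives the claim. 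I expect the main obstacle to be the bookkeeping of the coupling and internal-damping operators: one must re-express $E_k$ through the extended edge sum so that the Abel bulk term is recognizable as a forward difference, track the single surviving boundary contribution at $n=0$ for each operator, and keep the signs straight when that $n=0$ flux is moved across the equality. The interior sums and the telescoping of the remaining terms are routine once these boundary identities are in place.
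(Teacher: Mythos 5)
Your proposal is correct and follows essentially the same route as the paper's own (sketched) proof: both rest on the identical telescoping identities for the kinetic, mass, potential and source terms, and on discrete summation by parts (the discrete Green's first identity) together with the condition $u_{N+1}^k = u_N^k$ to convert the coupling and internal-damping terms into the coupling-energy difference, the boundary flux $c^2 \left( \frac{\bar{\delta} u_0^k - \bar{\delta} u_1^k}{2} \right) \frac{\delta_t u_0^k}{2 \Delta t}$, and the dissipative sums. The only distinction is organizational --- you multiply the scheme by the discrete velocity $\delta_t u_n^k / (2 \Delta t)$ and assemble $E_k - E_{k-1}$, while the paper computes $E_k - E_{k-1}$ first and then substitutes the scheme --- which is the same algebra read in opposite directions.
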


\begin{proof}
We provide a sketch of the proof in view that the task is tedious but, after all, a mere algebraic work. Term by term, it is straight-forward to check that 
\begin{eqnarray}
\sum _{n = 1} ^N \left[ \left( \frac {u _n ^{k + 1} - u _n ^k} {\Delta t} \right) ^2 - \left( \frac {u _n ^k - u _n ^{k - 1}} {\Delta t} \right) ^2 \right] & = & \sum _{n = 1} ^N \left( \frac {\delta _t ^2 u _n ^k} {\Delta t ^2} \right) \delta _t u _n ^k, \\
\sum _{n = 1} ^N  \left[ \frac {(u _n ^{k + 1}) ^2 + (u _n ^k) ^2} {2} - \frac {(u _n ^k) ^2 + (u _n ^{k - 1}) ^2} {2} \right] & = & \sum _{n = 1} ^N \left( \frac {u _n ^{k + 1} + u _n ^{k - 1}} {2} \right) \delta _t u _n ^k, \\
\sum _{n = 1} ^N \left\{ [V (u _n ^{k + 1}) + V (u _n ^k)] - [V (u _n ^k) - V (u _n ^{k - 1})] \right\} & = & \sum _{n = 1} ^N \left[ \frac {V (u _n ^{k + 1}) - V (u _n ^{k - 1})} {u _n ^{k + 1} - u _n ^{k - 1}} \right] \delta _t u _n ^k.
\end{eqnarray}
For the sake of convenience, we let 
\begin{equation}
B _k = \displaystyle {\frac {c ^2} {8} \sum _{n = 1} ^N \left[ \left( u _{n + 1} ^{k + 1} - u _n ^{k + 1} \right) ^2 + \left( u _{n - 1} ^{k + 1} - u _n ^{k + 1} \right) ^2 + \left( u _{n + 1} ^k - u _n ^k \right) ^2 + \left( u _{n - 1} ^k - u _n ^k \right) ^2 \right].}
\end{equation}
Using finite-difference scheme (\ref {Eqn:DiscrMain}) and the discrete, boundary conditions imposed on the problem, it is straight-forward to check that 
\begin{equation}
\begin{array}{rcl}
B _k - B _{k - 1} & = & \displaystyle {\frac {c ^2} {8} \sum _{n = 1} ^N \left[ - 2 \left( \bar {\delta} \delta _x ^2 u _n ^k \right) \left( \delta _t u _n ^k \right) \right.} \\
  & & \displaystyle {\left. + \left( \bar {\delta} u _{n + 1} ^k - \bar {\delta} u _n ^k \right) \left( \delta _t u _{n + 1} ^k \right) - \left( \bar {\delta} u _n ^k - \bar {\delta} u _{n - 1} ^k \right) \left( \delta _t u _n ^k \right) \right.} \\
 & & \displaystyle {\quad \left. + \left( \bar {\delta} u _{n + 1} ^k - \bar {\delta} u _n ^k \right) \left( \delta _t u _n ^k \right) - \left( \bar {\delta} u _n ^k - \bar {\delta} u _{n - 1} ^k \right) \left( \delta _t u _{n - 1} ^k \right) \right].}
\end{array}
\end{equation}
Notice that the last four terms under the summation symbol form two telescoping series, which may be readily simplified. Next, the difference $E _k - E _{k - 1}$ is computed. After some simplification, we obtain that
\begin{equation}
\frac {E _k - E _{k - 1}} {\Delta t} = \beta \sum _{n = 1} ^N \left( \frac {\delta _t \delta _x ^2 u _n ^k} {2 \Delta t} \right) \left( \frac {\delta _t u _n ^k} {2 \Delta t} \right) - \gamma \sum _{n = 1} ^N \left( \frac {\delta _t u _n ^k} {2 \Delta t} \right) ^2 + c ^2 \left( \frac {\bar {\delta} u _0 ^k - \bar {\delta} u _1 ^k} {2} \right) \frac {\delta _t u _0 ^k} {2 \Delta t}.
\end{equation}
The result follows now after simplifying the term multiplied by $\beta$, followed by an application of the discrete version of Green's First Identity \cite {Macias-Supra} to the sequence $(u _n ^{k + 1} - u _n ^{k - 1}) _{n = 0} ^{N + 1}$.
\end{proof}

\begin{lemma}
The discrete rate of change of energy of a sequence $(u _n ^k)$ satisfying finite-difference scheme (\ref {Eqn:DiscrMain2}) is given by
\begin{equation}
\begin{array}{rcl}
\displaystyle {\frac {E _k - E _{k - 1}} {\Delta t}} & = & \displaystyle {- \beta \left\{ \sum _{n = 1} ^N \left( \frac {\delta _t u _n ^k - \delta _t u _{n - 1} ^k} {2 \Delta t} \right) ^2 + \left( \frac {\delta _t u _1 ^k - \delta _t u _0 ^k} {2 \Delta t} \right) \frac {\delta _t u_0 ^k} {2 \Delta t} \right\}} \\
 & & \quad \displaystyle {- \gamma \sum _{n = 1} ^N \left( \frac {\delta _t u _n ^k} {2 \Delta t} \right) ^2 + c ^2 \left[ \left(\frac {\delta ^2 u _0 ^k - \delta ^2 u _1 ^k} {4}\right) \frac {\delta _t u _0 ^k} {2 \Delta t} \right]}.
\end{array}
\end{equation}\label{LemmaChi}
\end{lemma}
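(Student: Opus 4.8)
The plan is to follow the argument of Lemma~\ref{LemmaChi0} line for line, exploiting the fact that schemes (\ref{Eqn:DiscrMain}) and (\ref{Eqn:DiscrMain2}), together with their attached Hamiltonians (\ref{Eqn:Hamiltonian0}) and (\ref{Eqn:Hamiltonian}), differ \emph{only} in the coupling term: the kinetic, mass, potential, external-damping and internal-damping contributions are literally identical. As a consequence, the three telescoping identities for the kinetic, mass and potential pieces that opened the previous proof hold here verbatim, and both the $-\gamma\sum_{n=1}^N(\delta_t u_n^k/2\Delta t)^2$ term and the entire $\beta$-block of the right-hand side will reproduce themselves without change. The whole burden of the proof therefore reduces to re-deriving the single $c^2$ coupling/boundary contribution.

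Accordingly, I would isolate the spatial coupling energy from (\ref{Eqn:Hamiltonian}) and set
\begin{equation}
B_k = \frac{c^2}{4}\sum_{n=1}^N\left[\left(\frac{u_{n+1}^{k+1}+u_{n+1}^k}{2}-\frac{u_n^{k+1}+u_n^k}{2}\right)^2 + \left(\frac{u_{n-1}^{k+1}+u_{n-1}^k}{2}-\frac{u_n^{k+1}+u_n^k}{2}\right)^2\right],
\end{equation}
the analogue of the quantity $B_k$ used before, and compute $B_k - B_{k-1}$. This is where the one genuine novelty appears. Since the coupling energy is now built from the time averages $\tfrac12(u_n^{k+1}+u_n^k)$, and since the level-$k$ average minus the level-$(k-1)$ average equals $\tfrac12\delta_t u_n^k$ while their sum equals $\tfrac12\delta^2 u_n^k$, the factorization $x^2-y^2=(x-y)(x+y)$ turns each term of $B_k-B_{k-1}$ into a product in which a spatial difference of $\tfrac12\delta_t u_n^k$ multiplies a spatial difference of $\tfrac12\delta^2 u_n^k$. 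Thus the operator $\bar{\delta}$ that governed the earlier computation is replaced throughout by $\tfrac12\delta^2$ --- precisely the operator that sits in the coupling term $\tfrac{c^2}{4}\delta^2\delta_x^2 u_n^k$ of (\ref{Eqn:DiscrMain2}). Rearranging as before, I would isolate a $-2(\delta^2\delta_x^2 u_n^k)(\delta_t u_n^k)$ piece together with two sums over $n$ that telescope.

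The endgame then proceeds exactly as in Lemma~\ref{LemmaChi0}. Multiplying (\ref{Eqn:DiscrMain2}) by $\tfrac12\delta_t u_n^k$ and summing over $n$ expresses the combined kinetic, mass, potential and Josephson-current energy differences in terms of the $\delta^2\delta_x^2$, $\delta_t\delta_x^2$ and $\gamma$ contributions; the $\delta^2\delta_x^2 u_n^k$ term produced this way cancels identically against the one extracted from $B_k-B_{k-1}$, leaving only the $\beta$- and $\gamma$-terms, the telescoping remainder, and the boundary energy term of (\ref{Eqn:DEner}). Collapsing the telescoping sums with the boundary conditions $u_{N+1}^k-u_N^k=0$, and combining what survives with the time difference of the boundary energy $\tfrac{c^2}{4}\big(\tfrac12(u_1^{k+1}+u_1^k)-\tfrac12(u_0^{k+1}+u_0^k)\big)^2$, collapses the interior coupling into the single boundary term. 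A final application of the discrete Green's First Identity of \cite{Macias-Supra} to the sequence $(u_n^{k+1}-u_n^{k-1})_{n=0}^{N+1}$ --- the same step and the same identity as before --- reduces the $\beta$-contribution to the stated form.

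The main obstacle is entirely one of disciplined bookkeeping rather than of any new idea: I must keep track of the extra factor of $\tfrac12$ that each time average carries, so as to confirm that the coupling boundary term emerges as $c^2\big(\tfrac{\delta^2 u_0^k-\delta^2 u_1^k}{4}\big)\tfrac{\delta_t u_0^k}{2\Delta t}$ --- with denominator $4$ and the operator $\delta^2$ --- in place of the $c^2\big(\tfrac{\bar{\delta}u_0^k-\bar{\delta}u_1^k}{2}\big)\tfrac{\delta_t u_0^k}{2\Delta t}$ of the previous lemma. Because the $\beta$- and $\gamma$-blocks are structurally unchanged, verifying this one coefficient, along with the clean cancellation of the $\delta^2\delta_x^2$ terms, is the only place where the two proofs genuinely diverge.
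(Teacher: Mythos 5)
Your proposal is correct and follows essentially the same route as the paper's proof: it reuses the kinetic/mass/potential telescoping identities from Proposition~\ref{LemmaChi0}, isolates the time-averaged coupling energy (the paper's $C_k$, your $B_k$), exploits the difference-of-squares factorization so that $\bar{\delta}$ is replaced by $\tfrac12\delta^2$, cancels the resulting $\delta^2\delta_x^2$ sum against the coupling term of (\ref{Eqn:DiscrMain2}), and finishes with the boundary conditions and the discrete Green's First Identity applied to $(u_n^{k+1}-u_n^{k-1})_{n=0}^{N+1}$. The coefficient bookkeeping you flag as the only delicate point indeed works out to the stated boundary term $c^2\bigl(\tfrac{\delta^2 u_0^k-\delta^2 u_1^k}{4}\bigr)\tfrac{\delta_t u_0^k}{2\Delta t}$, exactly as in the paper.
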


\begin{proof}
Here, we employ some of the identities used in the proof of Proposition \ref {LemmaChi0}. Notice first of all that if we define $C _k$ by the formula
\begin{equation}
C _k = \frac {c ^2} {4} \sum _{n = 1} ^N \left[ \left( \frac {u _{n + 1} ^{k + 1} + u _{n + 1} ^k} {2} - \frac {u _n ^{k + 1} + u _n ^k} {2} \right) ^2 + \left( \frac {u _{n - 1} ^{k + 1} + u _{n - 1} ^k} {2} - \frac {u _n ^{k + 1} + u _n ^k} {2} \right) ^2 \right],
\end{equation}
then
\begin{equation}
\begin{array}{rcl}
C _k - C _{k - 1} & = & \displaystyle {\frac {c ^2} {4} \left\{ \left( \frac {\delta ^2 u _0 ^k - \delta ^2 u _1 ^k} {4} \right) \left( \delta _t u _0 ^k + \delta _t u _1 ^k \right) - \sum _{n = 1} ^N \left( \frac {\delta ^2 \delta _x ^2 u _n ^k } {2} \right) \delta _t u _n ^k \right\} }.
\end{array}
\end{equation}
We mimic here the proof of Proposition \ref {LemmaChi0}. As a consequence of the fact that the sequence $(u _n ^k)$ satisfies (\ref {Eqn:DiscrMain2}) and after an application of the discrete version of Green's First Identity \cite{Macias-Supra}, it is readily checked that the discrete rate of change of the energy between times $k$ and $k - 1$ is given by
\begin{equation}
\begin{array}{rcl}
\displaystyle {\frac {E _k - E _{k - 1}} {\Delta t}} & = & \displaystyle { - \beta \left\{ \sum _{n = 1} ^N \left( \frac {\delta _t u _n ^k - \delta _t u _{n - 1} ^k} {2 \Delta t} \right) ^2 + \left( \frac {\delta _t u _1 ^k - \delta _t u _0 ^k} {2 \Delta t} \right) \frac {\delta _t u _0 ^k} {2 \Delta t} \right\}} \\
  & & \displaystyle {- \gamma \sum _{n = 1} ^N \left( \frac {\delta _t u _n ^k} {2 \Delta t} \right) ^2 + \frac {c ^2} {2} \left( \frac {\delta ^2 u _0 ^k - \delta ^2 u _1 ^k} {4} \right) \left( \frac {\delta _t u _1 ^k + \delta _t u _0 ^k} {2 \Delta t} \right)} \\
  & & \displaystyle {+ \frac {c ^2} {2} \left( \frac {\delta ^2 u _0 ^k - \delta ^2 u _1 ^k} {4} \right) \left( \frac {\delta _t u _0 ^k - \delta _t u _1 ^k} {2 \Delta t} \right)},
\end{array}
\end{equation}
whence the result follows after an easy simplification.
\end{proof}


The methods described by finite-difference schemes (\ref {Eqn:DiscrMain}) and (\ref {Eqn:DiscrMain2}) are clearly consistent of order $O(\Delta t)^2$. Moreover, in the following, we give some necessary and sufficient conditions in order for these methods to be stable. 
Notice that when $\mathfrak {m}$ is a real number, problem (3) represents a discrete, nonlinear modification of the classical linear Klein-Gordon equation.

\begin{lemma}
Consider finite-difference scheme (\ref {Eqn:DiscrMain2}) with $V ^\prime = 0$, $J = 0$ and $\mathfrak {m} ^2$  a nonnegative constant. Then, $\Delta t \leq \frac {1} {c}$ is a necessary condition for method (\ref {Eqn:DiscrMain2}) to be linearly stable.
\label{Lemma:Silvia1}
\end{lemma}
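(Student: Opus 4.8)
The natural approach here is a von Neumann (Fourier) linear stability analysis of the constant-coefficient recursion that arises once the simplifying hypotheses are imposed. With $V^\prime = 0$ and $J = 0$ the nonlinear difference quotient in (\ref{Eqn:DiscrMain2}) vanishes, and the scheme collapses to a linear, three-level, constant-coefficient difference equation on the lattice. As is standard, I would set the boundary data aside (von Neumann analysis does not see them) and seek separated modal solutions of the form $u_n^k = \xi^k e^{\mathrm{i} n \theta}$, where $\theta \in [-\pi, \pi]$ is the spatial wavenumber on the unit lattice and $\xi \in \mathbb{C}$ is the amplification factor. Linear stability then means $|\xi| \le 1$ for both roots $\xi$ and for every admissible $\theta$.

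First I would substitute this ansatz into the linearized scheme, using that the spatial operator $\delta_x^2$ acts on $e^{\mathrm{i} n\theta}$ as multiplication by $-4\sin^2(\theta/2)$, while the temporal operators $\delta_t^2$, $\delta_t$, $\bar\delta$ and $\delta^2$ act on $\xi^k$ as multiplication by $\xi - 2 + \xi^{-1}$, $\xi - \xi^{-1}$, $\xi + \xi^{-1}$ and $\xi + 2 + \xi^{-1}$, respectively. Writing $s = 4\sin^2(\theta/2) \in [0,4]$ and dividing out the common factor $\xi^{k-1}e^{\mathrm{i}n\theta}$, the recursion reduces to a single quadratic characteristic equation with real coefficients,
\begin{equation}
A(s)\,\xi^2 + B(s)\,\xi + C(s) = 0,
\end{equation}
in which $A$, $B$ and $C$ are explicit polynomials in $\Delta t$, $c$, $\beta$, $\gamma$, $\mathfrak{m}$ and $s$, with $A(s) > 0$ throughout. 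The damping coefficients enter only through the odd operator $\delta_t$, so $\beta$ and $\gamma$ break the symmetry between $A$ and $C$, whereas the coupling term $\tfrac{c^2}{4}\delta^2\delta_x^2$ and the mass term $\tfrac{\mathfrak{m}^2}{2}\bar\delta$ enter symmetrically.

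Next I would impose $|\xi| \le 1$ uniformly in $\theta$ via the Schur--Cohn (Jury) criterion for a real quadratic: both roots lie in the closed unit disk precisely when $A + B + C \ge 0$, $A - B + C \ge 0$, and $|C| \le A$. Since only a \emph{necessary} condition is claimed, the economical route is the contrapositive, and I would test the highest-frequency lattice mode $\theta = \pi$ (equivalently $s = 4$, the shortest representable wavelength, which carries the sharpest constraint on $\Delta t$). The aim is to show that one of the three inequalities fails as soon as $c\,\Delta t > 1$, so that the critical mode produces a root with $|\xi| > 1$; tracking the coefficient of $c^2(\Delta t)^2$ through the $s = 4$ evaluation is what isolates the bound $c^2(\Delta t)^2 \le 1$, i.e. $\Delta t \le 1/c$.

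The main obstacle is precisely that the coupling is treated implicitly, through the time average $\tfrac14\delta^2$, which is a strongly stabilizing discretization; the delicate point is therefore to establish that $\Delta t \le 1/c$ is genuinely forced, rather than the scheme being unconditionally stable, by locating the exact structural contribution that destabilizes the $\theta = \pi$ mode. Concretely, I expect the hardest bookkeeping to lie in (i) verifying that the binding Schur--Cohn constraint is attained at the extreme wavenumber $s = 4$ and not at an interior $\theta$, (ii) separating the destabilizing effect of the coupling from the stabilizing effect of the nonnegative damping $\beta, \gamma$ and mass $\mathfrak{m}^2$, which only sharpen the inequalities $A \pm B + C \ge 0$ and $|C|\le A$, and (iii) handling the degenerate wavenumber $s = 0$, where $A + B + C$ can vanish and the two amplification factors coalesce at $\xi = 1$, so that the relevant inequality must be read in its limiting, marginal form.
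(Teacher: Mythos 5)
Your setup (Fourier ansatz, quadratic characteristic polynomial, Schur--Cohn/Jury conditions) is the same von Neumann framework the paper uses via the amplification matrix (\ref{Eqn:Matrix2}), but the step your entire argument hangs on --- that one of the three Jury inequalities fails at the mode $\xi = \pi$ as soon as $c\,\Delta t > 1$ --- is false, and this is precisely the ``delicate point'' you flagged and deferred. In the paper's notation your $A(s)$, $-B(s)$, $C(s)$ are $\hat{g}(\xi)$, $\hat{f}(\xi)$, $\hat{h}(\xi)$ with $s = 4\sin^2(\xi/2)$, and the characteristic equation is $\hat{g}\lambda^2 - \hat{f}\lambda + \hat{h} = 0$. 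Computing the three Jury quantities for scheme (\ref{Eqn:DiscrMain2}) gives, for \emph{every} wavenumber $\xi$ and \emph{every} $\Delta t > 0$,
\begin{align*}
\hat{g} - \hat{f} + \hat{h} &= \left[ 4 c^2 \sin^2 \left( \frac{\xi}{2} \right) + \mathfrak{m}^2 \right] (\Delta t)^2 \geq 0, \\
\hat{g} + \hat{f} + \hat{h} &= 4 + \mathfrak{m}^2 (\Delta t)^2 > 0, \\
\hat{g} - \hat{h} &= \left[ 4 \beta \sin^2 \left( \frac{\xi}{2} \right) + \gamma \right] \Delta t \geq 0, \qquad \hat{g} + \hat{h} > 0,
\end{align*}
so all three Schur--Cohn conditions hold unconditionally: the implicit average $\frac{c^2}{4}\delta^2$ feeds $c^2(\Delta t)^2\sin^2(\xi/2)$ into both $\hat{g}$ and $\hat{h}$ with exactly the weight needed to offset the growth of $|\hat{f}|$. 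Concretely, at $\xi = \pi$ with $\beta = \gamma = \mathfrak{m} = 0$ and $c\,\Delta t > 1$, the discriminant is $\hat{f}^2 - 4\hat{g}\hat{h} = -16\,c^2(\Delta t)^2 < 0$ and the two amplification factors are complex conjugates of modulus $\sqrt{\hat{h}/\hat{g}} = 1$: they sit on the unit circle and never leave the closed disk. There is no unstable Fourier mode to exhibit, so the contrapositive you outline cannot be completed; carried out honestly, your bookkeeping proves the opposite of what you need, namely that the symbol of (\ref{Eqn:DiscrMain2}) satisfies the von Neumann condition for all $\Delta t$.

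It is worth seeing how this differs from what the paper actually does, because the paper does not (and could not) argue your way either. Its proof never exhibits an unstable mode for $\Delta t > 1/c$; after recording that $\rho(A(\xi)) \leq 1$ is necessary for stability, it proves the reverse-direction implication ``if $\Delta t \leq 1/c$ then $\rho(A(\xi)) \leq 1$ for all $\xi$,'' the hypothesis $\Delta t \leq 1/c$ entering only to force $\hat{f}(\xi) \geq 0$ so that the chain of inequalities in its Case (2) goes through. That establishes the CFL bound as a \emph{sufficient} condition for the spectral-radius inequality, which is not the necessity asserted in the statement; and your computation shows that no argument at the level of the Fourier symbol can deliver that necessity, since the spectral-radius inequality holds for every $\Delta t$. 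So the gap is not a fixable omission in your write-up: the strategy you chose (the logically correct one for a ``necessary condition'' claim) terminates in a disproof of the claim within von Neumann analysis, and any genuine necessity would have to come from features invisible to the modal analysis --- for instance the dissipative boundary rows of the scheme --- which neither your proposal nor the paper examines.
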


\begin{figure}
\centerline{\includegraphics[width=0.8\textwidth]{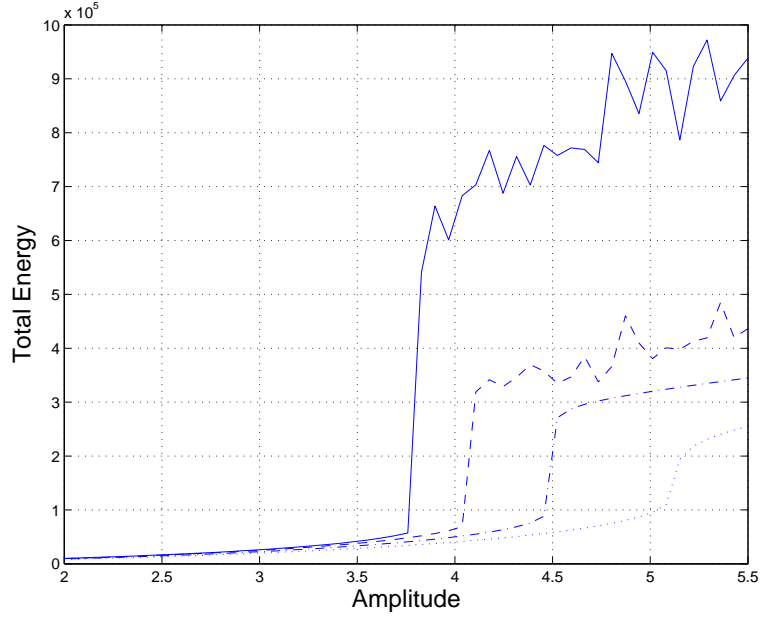}}
\caption{Graph of total energy versus driving amplitude for a system (\ref {Eqn:Main2}) subject to a harmonic frequency of $0.8$, with coupling coefficient equal to $5$, and a time period of $10000$. Different values of external damping were chosen: $\gamma = 0$ (solid), $0.1$ (dashed), $0.2$ (dash-dotted), and $0.3$ (dotted).\label{Fig1}}
\end{figure}

\begin{proof}
We apply the von Neumann stability analysis to method (\ref {Eqn:DiscrMain2}) and we obtain the following augmentation matrix:
\begin{equation}
A (\xi) = \left( %
\begin{array}{cc}
\frac {\hat {f} ( \xi )} {\hat {g} ( \xi )} & -
\frac {\hat {h} ( \xi )} {\hat {g} ( \xi )} \\
1 & 0
\end{array}
\right), \label{Eqn:Matrix2}
\end{equation}
where
\begin{equation}
\begin{array}{rcl}
\hat {f} (\xi) & = & \displaystyle {2 - 2 c ^2 (\Delta t) ^2
\sin ^2 \left( \frac {\xi} {2} \right)}, \\
\hat {g} (\xi) & = & \displaystyle {1 + \left[ c ^2 (\Delta t) ^2
+ 2 \beta \Delta t \right] \sin ^2 \left(\frac {\xi} {2}\right) +
\frac {\mathfrak {m} ^2 (\Delta t) ^2 + \gamma \Delta t} {2}, \quad \textrm {and}} \\
\hat {h} (\xi) & = & \displaystyle {1 + \left[ c ^2 (\Delta t) ^2 -
2 \beta \Delta t \right] \sin ^2 \left(\frac {\xi} {2}\right) +
\frac {\mathfrak {m} ^2 (\Delta t) ^2 - \gamma \Delta t} {2}.}
\end{array}
\end{equation}
Since all norms in a finite-dimensional space are equivalent and the spectral radius $\rho (A)$ of a square matrix $A$ is the greatest lower bound for the set of matrix-induces norms of $A$, then a necessary condition for linear stability is that $\rho (A)\leq 1$. The
eigenvalues of $A (\xi)$ are:
\begin{equation}
 \lambda _\pm(\xi)  = \displaystyle{\frac {\hat {f} (\xi) \pm
\sqrt {\hat {f} ^2 (\xi) - 4 \hat {g} (\xi) \hat {h} (\xi)}} {2 \hat
{g} (\xi)}}. \label{eigenvalues}
\end{equation}
In order to show that the spectral radius of $A( \xi )$ is less than or equal to 1, we will prove that $\lambda
_\pm(\xi)\leq 1$ for all  $\xi\in [0,\pi]$ by considering two cases:
\begin{enumerate}
\item[(1)] If $\lambda _\pm (\xi) \in \mathbb {C}$ then $\hat {f} ^2 (\xi) - 4 \hat {g} (\xi) \hat {h} (\xi) \leq 0$ or,
equivalently, that $4 \hat {g} (\xi) \hat {h} (\xi) \geq \hat {f} ^2 (\xi)$, whence it follows that $\hat {h} (\xi)$ is nonnegative. Moreover,
\begin{equation}
| \lambda _\pm (\xi) | = \sqrt {\frac {\hat {f} ^2 (\xi) + (4 \hat {g} (\xi) \hat {h} (\xi) - \hat {f} ^2 (\xi))} {4 \hat {g} ^2 (\xi)}} = \sqrt {\frac {\hat {h} (\xi)} {\hat {g} (\xi)}} \leq 1.
\label{Eqn:complej2}
\end{equation}
\item[(2)] If $\lambda_\pm (\xi) \in \mathbb{R}$ and $\Delta t \leq \frac {1} {c}$, then we observe first of all that $ \hat {f} (\xi)$ is nonnegative with $\hat {f} ^2 (\xi) \leq 4$. This case leads to the chain of inequalities $1 \geq \hat {h} (\xi) \geq 2 - \hat {g} (\xi) \geq \hat {f} (\xi) - \hat {g} (\xi)$. Clearly, $2 \hat {g} (\xi) \geq \hat {f} (\xi)$, so that
\begin{equation}
| \lambda _\pm (\xi) | \leq \frac {\hat {f} (\xi) + \sqrt {\hat {f} ^2 (\xi) - 4 \hat {g} (\xi) \hat {h} (\xi)}} {2 \hat {g} (\xi)} \leq \frac {\hat {f} (\xi) + \sqrt {(2 \hat {g} (\xi) - \hat {f} (\xi)) ^2}} {2 \hat {g} (\xi)} = 1.\label{Eqn:real2}
\end{equation}
\end{enumerate}
\end{proof}

\begin{figure}
\centerline{\includegraphics[width=0.8\textwidth]{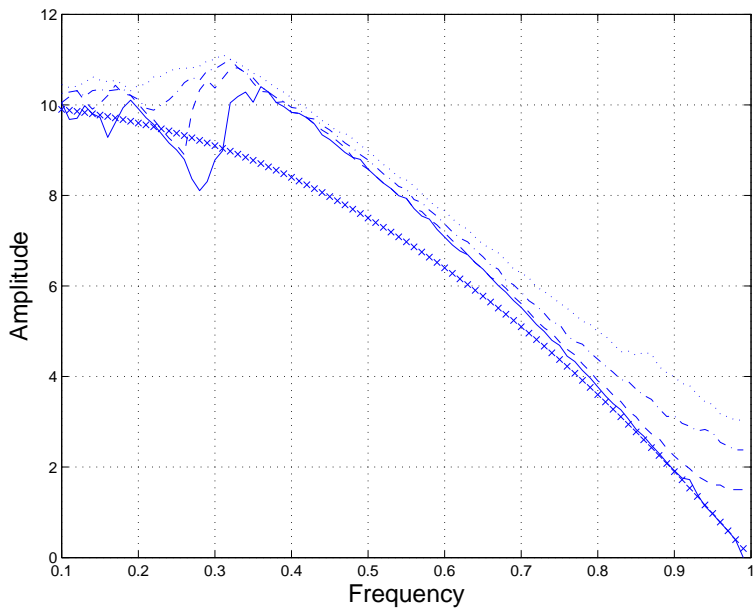}}
\caption{Diagram of smallest driving amplitude at which supratransmission occurs versus driving frequency for a system (\ref {Eqn:Main2}) subject to a harmonic frequency with coupling coefficient equal to $5$. Different values of external damping were chosen: $\gamma = 0$ (solid), $0.1$ (dashed), $0.2$ (dash-dotted), and $0.3$ (dotted).\label{Fig2}}
\end{figure}

\begin{lemma}
Consider finite-difference scheme (\ref {Eqn:DiscrMain2}) with $V ^\prime = 0$, $J = 0$ and $\mathfrak {m} ^2$ a nonnegative constant. This scheme is linearly stable in the infinite norm if $\frac{2}{\gamma} < \Delta t < \frac {\sqrt {2}} {c}$. \label{Lemma:Silvia2}
\end{lemma}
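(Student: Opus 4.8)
The plan is to interpret ``linearly stable in the infinite norm'' as the requirement that the amplification matrix $A(\xi)$ of (\ref{Eqn:Matrix2}) satisfy $\| A(\xi) \|_\infty \le 1$ for every $\xi \in [0,\pi]$; this is stronger than $\rho(A(\xi)) \le 1$ and immediately yields stability, since then $\| A(\xi)^n \|_\infty \le \| A(\xi) \|_\infty^n \le 1$ uniformly in $n$ and $\xi$. Reusing the functions $\hat f, \hat g, \hat h$ from the proof of Proposition \ref{Lemma:Silvia1}, I would first observe that $\hat g(\xi) \ge 1 > 0$, because each term in its definition is nonnegative under the standing assumptions $c, \beta, \gamma \ge 0$ and $\mathfrak m^2 \ge 0$. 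Reading off the maximal absolute row sum of $A(\xi)$ then gives $\| A(\xi) \|_\infty = \max\{ 1, (|\hat f(\xi)| + |\hat h(\xi)|)/\hat g(\xi) \}$, so the whole statement collapses to the scalar inequality $|\hat f(\xi)| + |\hat h(\xi)| \le \hat g(\xi)$ for all $\xi \in [0,\pi]$.

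The second step is to record two identities, obtained by straightforward cancellation:
\[
\hat g - \hat h = 4 \beta\, \Delta t\, \sin^2\!\left( \tfrac{\xi}{2} \right) + \gamma\, \Delta t, \qquad \hat g + \hat h = 2 + 2 c^2 (\Delta t)^2 \sin^2\!\left( \tfrac{\xi}{2} \right) + \mathfrak m^2 (\Delta t)^2 .
\]
Both right-hand sides are nonnegative, so $|\hat h| \le \hat g$ holds automatically, and the target inequality is equivalent to $|\hat f| \le \hat g - |\hat h|$. I would now split on the sign of $\hat h$. If $\hat h < 0$, the required bound is $|\hat f| \le \hat g + \hat h$, which holds with no restriction on $\Delta t$: the companion identities $\hat g + \hat h - \hat f = 4 c^2 (\Delta t)^2 \sin^2(\xi/2) + \mathfrak m^2 (\Delta t)^2 \ge 0$ and $\hat g + \hat h + \hat f = 4 + \mathfrak m^2 (\Delta t)^2 \ge 0$ bracket $\hat f$ between $\pm(\hat g + \hat h)$.

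The hypotheses are used only in the complementary case $\hat h \ge 0$, where $\hat g - |\hat h| = 4 \beta\, \Delta t\, \sin^2(\xi/2) + \gamma\, \Delta t \ge \gamma\, \Delta t$. Here the upper bound $\Delta t < \sqrt 2/c$ gives $c^2 (\Delta t)^2 < 2$, so that $\hat f = 2 - 2 c^2 (\Delta t)^2 \sin^2(\xi/2)$ lies in $(-2, 2]$ and hence $|\hat f| \le 2$; simultaneously the lower bound $\Delta t > 2/\gamma$ gives $\gamma\, \Delta t > 2$. Chaining these, $|\hat f| \le 2 < \gamma\, \Delta t \le \hat g - \hat h$, which is exactly what is needed, and the proof of $\| A(\xi) \|_\infty \le 1$ is complete.

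I expect the difficulty to be conceptual rather than computational. The algebra is short once the right reduction is in place, but one must first realise that the spectral radius cannot be the operative object: by the two identities above it stays $\le 1$ for \emph{all} $\Delta t$, so it is blind to the hypotheses, whereas the matrix norm $\| A \|_\infty$ is not. The second point to get right is why the admissible window is two-sided and CFL-atypical: the lower bound $\Delta t > 2/\gamma$ is precisely the amount of external damping-per-step needed to dominate $|\hat f|$ in the single regime, $\hat h \ge 0$, in which $\hat f$ may have turned negative, while the upper bound $\Delta t < \sqrt 2/c$ is exactly what keeps $|\hat f| \le 2$ there.
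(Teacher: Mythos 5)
Your proof is correct and takes essentially the same route as the paper: a von Neumann analysis in which linear stability is obtained from the sufficient condition $\Vert A(\xi) \Vert_\infty \leq 1$ for the augmentation matrix (\ref{Eqn:Matrix2}), reduced through the row sums to scalar inequalities controlled by $\hat{g}(\xi) \pm \hat{h}(\xi)$. The only difference is one of explicitness: the paper's condition (\ref{Eqn:cond1}) silently replaces $|\hat{f}(\xi)|$ by $2$ --- which is exactly where the hypothesis $\Delta t < \sqrt{2}/c$ is consumed --- whereas you spell that step out, and you also note that the case $\hat{h}(\xi) < 0$ requires no hypothesis at all, so your write-up slightly sharpens and completes the argument the paper sketches.
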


\begin{proof}
A sufficient condition for scheme (\ref{Eqn:DiscrMain2}) to be linearly stable is that $||A(\xi)||\leq 1$ for all $\xi \in [0 , \pi]$, for some norm $\Vert \cdot \Vert$; in our case, we will consider the infinite norm $\Vert \cdot \Vert _\infty$ applied to the augmentation matrix (\ref{Eqn:Matrix2}). Using the same notation as in the previous result, observe that the inequality $\Vert A (\xi) \Vert_\infty \leq 1$ is satisfied in case that 
\begin{equation}
\displaystyle\Bigl|\frac{2}{\hat {g} (\xi)}\Bigr|+\Bigl|-\frac{\hat
{h} (\xi)}{\hat {g} (\xi)}\Bigr| \leq 1 .\label{Eqn:cond1}
\end{equation}
In turn, condition (\ref{Eqn:cond1}) holds if $2\leq \hat {g} (\xi)+\hat
{h} (\xi)$ and $2\leq \hat {g} (\xi)-\hat {h} (\xi)$. But the former inequality is always true, while the latter is satisfied if and only if
\begin{equation}
1 \leq 2 \beta \Delta t  \sin ^2 \left(\frac {\xi} {2}\right) + \frac {\gamma \Delta t} {2}, \quad \forall \xi \in [0 , \pi]. \label{Eqn:cond2}
\end{equation}
\end{proof}

\begin{corollary}
The finite-difference scheme (\ref{Eqn:DiscrMain}) is linearly stable if and only if condition $\frac{2}{\gamma}\leq \Delta t \leq \frac{1}{c}$  holds.\label{Lemma:Silvia3}
\end{corollary}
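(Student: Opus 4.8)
The plan is to rerun, for scheme (\ref{Eqn:DiscrMain}), the same von Neumann program that Propositions \ref{Lemma:Silvia1} and \ref{Lemma:Silvia2} carry out for scheme (\ref{Eqn:DiscrMain2}). First I would linearize by setting $V' = 0$, $J = 0$ and $\mathfrak{m}^2 \ge 0$, insert the Fourier mode $u_n^k = \hat{u}^k e^{i n \xi}$, and use $\delta_x^2 \mapsto -4\sin^2(\xi/2)$. Writing the resulting three-level recurrence as a first-order system produces an augmentation matrix of exactly the shape (\ref{Eqn:Matrix2}), with new symbols $\hat{f}, \hat{g}, \hat{h}$. The decisive structural fact to record is that, because scheme (\ref{Eqn:DiscrMain}) couples through $\frac{c^2}{2}\bar{\delta}$ (which touches only the levels $k\pm 1$) rather than through $\frac{c^2}{4}\delta^2$, the term $-2u_n^k$ coming from $\delta_t^2$ is the \emph{only} contribution at level $k$; hence $\hat{f}(\xi) \equiv 2$ is constant in $\xi$, and the coupling migrates entirely into $\hat{g}$ and $\hat{h}$, which acquire a term $2c^2(\Delta t)^2\sin^2(\xi/2)$ in place of the $c^2(\Delta t)^2\sin^2(\xi/2)$ of the second scheme. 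One then computes $\hat{g} - \hat{h} = 4\beta\Delta t\,\sin^2(\xi/2) + \gamma\Delta t$ and $\hat{g} + \hat{h} = 2 + 4c^2(\Delta t)^2\sin^2(\xi/2) + \mathfrak{m}^2(\Delta t)^2$.

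For the sufficiency of the lower bound I would mirror Proposition \ref{Lemma:Silvia2}: imposing $\Vert A(\xi)\Vert_\infty \le 1$ through the row-sum inequality, which with $\hat{f} \equiv 2$ becomes the analogue of (\ref{Eqn:cond1}), namely $2 + |\hat{h}(\xi)| \le \hat{g}(\xi)$ for all $\xi \in [0,\pi]$. Splitting on the sign of $\hat{h}$, the case $\hat{h} < 0$ reduces to $\hat{g} + \hat{h} \ge 2$, which is automatic from the identity above; the case $\hat{h} \ge 0$ reduces to $\hat{g} - \hat{h} = 4\beta\Delta t\,\sin^2(\xi/2) + \gamma\Delta t \ge 2$, whose worst case $\xi = 0$ is precisely the requirement $\gamma\Delta t \ge 2$, i.e. $\Delta t \ge 2/\gamma$. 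This is the analogue of condition (\ref{Eqn:cond2}) and delivers the lower bound as a clean necessary-and-sufficient condition for the infinity-norm estimate.

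For the upper bound I would return to the spectral-radius criterion of Proposition \ref{Lemma:Silvia1}. With $\hat{f} \equiv 2$ the eigenvalues collapse to $\lambda_\pm = (1 \pm \sqrt{1 - \hat{g}\hat{h}})/\hat{g}$, so the complex branch again gives $|\lambda_\pm| = \sqrt{\hat{h}/\hat{g}} \le 1$ exactly as in (\ref{Eqn:complej2}), and the only possible obstruction lies on the real branch $\hat{g}\hat{h} \le 1$. The plan is to examine this branch mode by mode, paying attention to the short-wavelength mode $\xi = \pi$ where the $2c^2(\Delta t)^2$ term in $\hat{g}$ and $\hat{h}$ is largest, and to show that controlling it is what pins the step size to $\Delta t \le 1/c$; combining this with the lower bound from the previous paragraph would then assemble the stated equivalence.

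The hard part will be the necessity of the upper bound $\Delta t \le 1/c$. For scheme (\ref{Eqn:DiscrMain2}) this bound is forced because $\hat{f} = 2 - 2c^2(\Delta t)^2\sin^2(\xi/2)$ changes sign once $\Delta t$ exceeds $1/c$, breaking the real-branch estimate (\ref{Eqn:real2}); but for scheme (\ref{Eqn:DiscrMain}) the symbol $\hat{f} \equiv 2$ is $\xi$-independent, and a bare spectral-radius computation shows the real branch obeys $\lambda_+ \le 1$ and $\lambda_- \ge -1$ for \emph{every} $\Delta t$, since these reduce to the always-valid inequalities $\hat{g} + \hat{h} \ge 2$ and $\hat{g} + \hat{h} \ge -2$. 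Thus the upper CFL restriction does not emerge from $\rho(A) \le 1$ alone. The crux of the argument is therefore to pin down the precise notion of linear stability intended here — presumably a uniform power-boundedness of $A(\xi)^k$ that penalizes the marginal short-wavelength mode rather than merely bounding $\rho(A)$ — and to verify that the threshold it produces is exactly $c^2(\Delta t)^2 \le 1$ and not some larger constant; reconciling this with the infinity-norm lower bound $\Delta t \ge 2/\gamma$ so that the two together form a genuine ``if and only if'' is where the bookkeeping must be done most attentively.
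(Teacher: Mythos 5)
Your first two paragraphs are exactly the paper's route, carried out more carefully than the paper itself does it. The paper's entire proof consists of the remark that the augmentation matrix for scheme (\ref{Eqn:DiscrMain}) is (\ref{Eqn:Matrix2}) with $\hat{f}(\xi) \equiv 2$, followed by the assertion that ``the proof of Propositions \ref{Lemma:Silvia1} and \ref{Lemma:Silvia2} are valid for this case.'' It does not even record the fact, which you compute correctly, that $\hat{g}$ and $\hat{h}$ then carry the doubled coupling term $2c^2(\Delta t)^2\sin^2(\xi/2)$; fortunately this is harmless, since the two propositions only use $\hat{g} \geq 1$, $\hat{g} - \hat{h} \geq 0$, and $\hat{g} + \hat{h} \geq 2$. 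Your row-sum analysis is precisely the Proposition \ref{Lemma:Silvia2} argument specialized to $\hat{f} \equiv 2$, and your conclusion that $\Vert A(\xi) \Vert_\infty \leq 1$ for all $\xi$ holds exactly when $\Delta t \geq 2/\gamma$ is correct.

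The obstruction you identify in your last paragraph is genuine --- and the important thing for you to know is that the paper does not overcome it either. In Proposition \ref{Lemma:Silvia1}'s proof, the hypothesis $\Delta t \leq 1/c$ is used only to secure $\hat{f}(\xi) \geq 0$ and $\hat{f}^2(\xi) \leq 4$ in the real-eigenvalue case; with $\hat{f} \equiv 2$ both hold trivially, so re-running that proof, as the paper instructs, gives $\rho(A(\xi)) \leq 1$ for \emph{every} $\Delta t > 0$, exactly as your computation shows. Nothing in the symbol $A(\xi)$ changes character as $c\Delta t$ crosses $1$ (the only degenerate, marginal modes, such as $\xi = 0$ with $\gamma = \mathfrak{m} = 0$, occur independently of $\Delta t$), so no symbol-based notion of stability --- spectral radius, power-boundedness, or norm bound --- can produce the threshold $1/c$ for this scheme; likewise the infinity-norm argument needs no upper bound at all, since the role of $\Delta t < \sqrt{2}/c$ in Proposition \ref{Lemma:Silvia2} was only to keep $|\hat{f}| \leq 2$. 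What the paper's argument legitimately establishes is therefore: the norm condition holds iff $\Delta t \geq 2/\gamma$, and $\rho(A(\xi)) \leq 1$ unconditionally. The necessity of $\Delta t \leq 1/c$ (and, strictly, the necessity of $\Delta t \geq 2/\gamma$ for stability itself, the norm bound being only a sufficient condition) is asserted, not proven. So do not search for a cleverer stability notion to close your gap: your proposal already reproduces everything the paper actually proves, and the missing step is missing from the paper as well.
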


\begin{proof}
We apply again the von Neumann stability analysis for the scheme (\ref{Eqn:DiscrMain}). The augmentation matrix for this scheme is the same as (\ref {Eqn:Matrix2}) when we set $\hat {f} (\xi) = 2$ for all $\xi \in [0 , \pi]$ and the proof of Propositions \ref{Lemma:Silvia1} and \ref{Lemma:Silvia2} are valid for this case.
\end{proof}

\begin{figure}
\centerline{\includegraphics[width=0.8\textwidth]{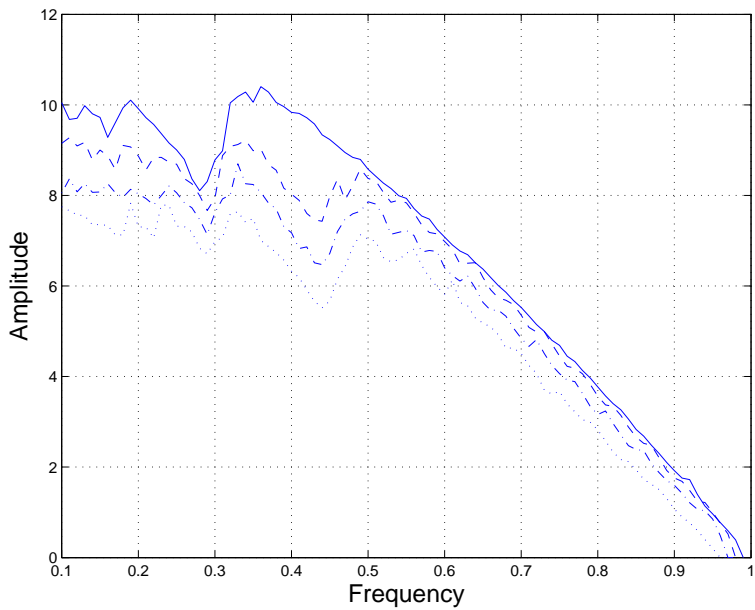}}
\caption{Diagram of smallest driving amplitude at which supratransmission occurs versus driving frequency for a system (\ref {Eqn:Main2}) subject to a harmonic frequency with coupling coefficient equal to $5$. Different values of the Josephson current were chosen: $J = 0$ (solid), $0.1$ (dashed), $0.2$ (dash-dotted), and $0.3$ (dotted).\label{Fig4}}
\end{figure}

\section{Application\label {Sec5}}

Before we provide an application of the numerical techniques presented in this work, we must declare that the results of this section were obtained by means of (\ref {Eqn:DiscrMain}) and (\ref {Eqn:DiscrMain2}). Simulations in both the domain of solutions and the energy domain have shown that the methods agree excellently. Moreover, it is important to mention that we have performed several numerical simulations (not shown here) with both methods, for several nonnegative values of $\beta$, $\gamma$ and $\mathfrak {m} ^2$, several values of the positive constant $c ^2$ and both $V ^\prime$ and $J$ equal to zero, observing stability of the methods when the corresponding stability conditions are satisfied. 

The validity of the implementation of our methods is checked next against the prediction of the nonlinear supratransmission threshold for a semi-infinite, linear array of Josephson junctions in parallel and coupled through superconducting wires, for which the mathematical model is given by (\ref {Eqn:Main2}). Throughout, we employ a coupling coefficient equal to $5$, and follow the systematic procedure used in \cite {Macias-Supra}, namely:
\begin{enumerate}
\item For a fixed frequency $\Omega$ in the forbidden band-gap, we examine the behavior of solutions of our problem for different driving amplitudes, using the finite-difference schemes proposed in this article. According to the theory, there exists a critical amplitude above which the qualitative nature of the solutions drastically changes.

\item This drastic change in the behavior of the solutions is checked next in the energy domain, by making use of the energy schemes associated to each finite-difference scheme. A sudden increase in the total energy injected into the medium by the driving boundary must be observed above the predicted supratransmission threshold.

\item Finally, this procedure is applied to a regular partition $\{ \Omega _i \} _{i = 0} ^l$ of the frequency interval $[0 , \sqrt {\mathfrak {m} ^2 + 1}]$, obtaining thus a second array $\{ A _i \} _{i = 0} ^l$ containing, for each driving frequency $\Omega _i$, the corresponding critical amplitude $A _i$ at which supratransmission starts. In this way, a graph of occurrence of critical amplitude versus driving frequency may be constructed.

\item This procedure may be repeated for different choices of the parameters of the model studied.
\end{enumerate}

\begin{figure}[tbc]
\centerline{\includegraphics[width=0.8\textwidth]{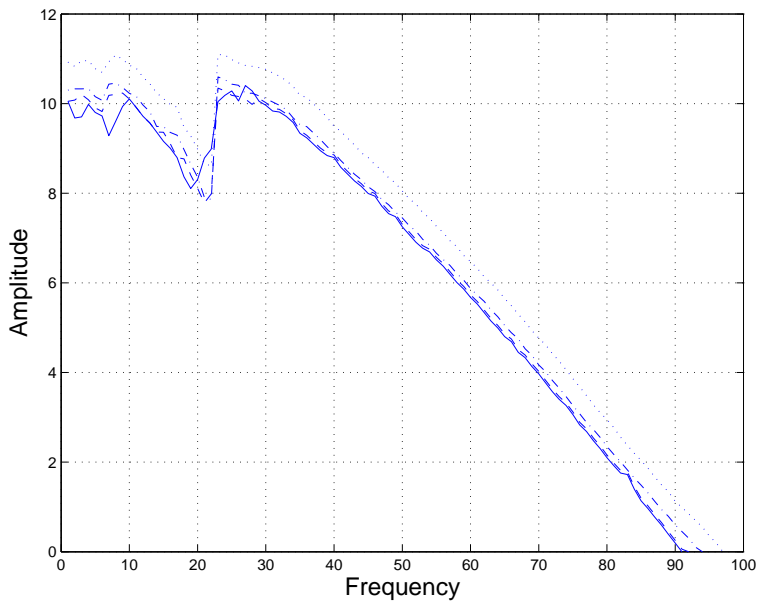}}
\caption{Diagram of smallest driving amplitude at which supratransmission occurs versus driving frequency for a system (\ref {Eqn:Main2}) subject to a harmonic frequency with coupling coefficient equal to $5$. Different values of internal damping were chosen: $\beta = 0$ (solid), $0.1$ (dashed), $0.3$ (dash-dotted), and $0.6$ (dotted).\label{Fig6}}
\end{figure}

Let us fix a driving frequency $\Omega = 0.8$ in the forbidden band-gap of system (\ref {Eqn:Main2}), let $c = 5$, and set all the other scalar parameters of the model equal to zero. Select a fixed period of time of $10000$, during which the system will be perturbed by the harmonic driving $\phi (t) = A \sin (\Omega t)$, where the value of $A$ will be chosen inside an interval that contains the predicted threshold. According to \cite {Chevrieux2}, the critical value of the continuous-limit case is provided by the expression 
\begin{equation}
A _s = 2 c (1 - \Omega ^2), 
\end{equation}
which yields a critical amplitude equal to $3.6$ in our case. In these circumstances, Fig. \ref {Fig1} presents the graph of the total energy of the system for values of the driving amplitude between $2$ and $5.5$, in the form of a solid line. A drastic increase in the total energy is observed around an amplitude of $3.75$, in good agreement with the continuous-limit case.

\begin{figure}
\centerline{\includegraphics[width=0.8\textwidth]{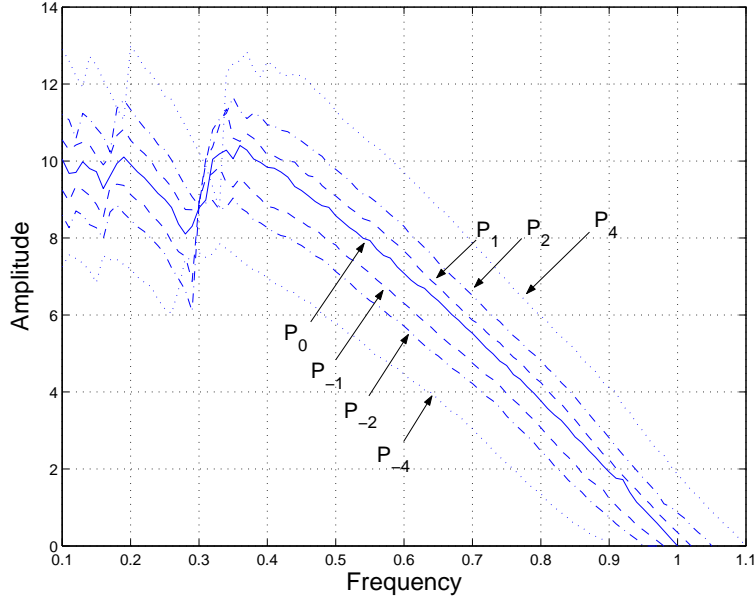}}
\caption{Diagram of smallest driving amplitude at which supratransmission occurs versus driving frequency for a system (\ref {Eqn:Main2}) subject to a harmonic frequency with coupling coefficient equal to $5$. For every $\ell = 0 , \pm 1 , \pm 2 , \pm 4$, the plot P$_\ell$ corresponds to a system with mass $\mathfrak {m}$ satisfying $\sqrt {\mathfrak {m} _\ell ^2 + 1} = 1 + \frac {\ell} {40}$.\label{Fig5}}
\end{figure}

Let us denote by $E _1 = E _1 (A)$ the total energy of the system described in the previous paragraph associated with the amplitude $A \in [2 , 5.5]$ at time $10000$, obtained using finite-difference scheme (\ref {Eqn:DiscrMain}). Similarly, let $E _2 = E _2 (A)$ be the corresponding energy obtained using scheme (\ref {Eqn:DiscrMain2}). It is important to mention that our computations show that
\begin{equation}
\max \{ |E _1 (A) - E _2 (A)| : A \in [2 , 5.5] \} < 3 \times 10 ^{- 10}.
\end{equation}
Moreover, for amplitudes below the critical threshold,
\begin{equation}
\max \{ |E _1 (A) - E _2 (A)| : A \in [2 , 3.5] \} < 8 \times 10 ^{- 16}.
\end{equation}

Next we wish to check the effect of external damping on the occurrence of the supratransmission value. To this effect, we fix three different values of external damping: $\gamma = 0.1$, $0.2$ and $0.3$; the results are displayed in Fig. \ref {Fig1}, in which the corresponding graphs are presented as dashed, dashed-dotted and dotted lines, respectively. It is worth noticing that the presence of external damping produces a right shift in the occurrence of the supratransmission value and, at the same time, a decrease in the total energy of the system. Similar remarks have been made for the same problem with Dirichlet boundary data \cite {Macias-Supra}.

In a next sage, we wish to compute diagrams for the occurrence of supratransmission in the presence of all parameters. So, we compute graphs of driving amplitude at which supratransmission first starts versus driving frequency for system (\ref {Eqn:Main2}). Under the same computational setting as before, Figs. \ref {Fig2} and \ref {Fig4} present such diagrams for different values of the external damping coefficient and the generalized Josephson current, respectively. A quick comparison with similar results obtained using a different finite-difference scheme \cite {Macias-Signals3} leads us to certify the validity of our method. Additionally, we provide diagrams which evidence the physical implications of including a relativistic mass and internal damping (see Figs. \ref {Fig5} for mass, and \ref {Fig6} for internal damping), the results being in qualitative agreement with the Dirichlet case, in the following senses:

\begin{itemize}
\item The phenomenon of harmonic phonon quenching still appears in the presence of external and internal damping.
\item The discrepancy region due to phonon quenching is shortened as the external damping coefficient is increased.
\item The threshold value at which supratransmission first occurs, for fixed frequencies outside the discrepancy region, increases as the damping coefficient (internal or external) increases.
\item The diagram, for any fixed value of $\beta \geq 0$, is approximately equal to the corresponding diagram for the undamped system, shifted $\beta$ horizontal units to the right. 
\item A horizontal shift of $\sqrt {1 + \mathfrak {m} ^2} - 1$ units in the diagram of a sine-Gordon system of mass $\mathfrak {m}$ with respect to the corresponding massless system, is observed for small masses and frequencies outside the discrepancy region.
\end{itemize}

\section{Conclusions\label{Sec:Concl}}

In this article, we have presented two numerical methods to approximate solutions of a system of differential equations, which models discrete arrays consisting of Josephson junctions coupled with superconducting wires, in which each site is described by a modified, spatially discrete version of the one-dimensional sine-Gordon equation. Both methods are consistent and conditionally stable.

As an important part of the methods introduced, the work provides consistent, discrete schemes for the local energies of the links and the total energy of the system. These schemes are such that the discrete rates of change of total energy of the system are consistent approximations of the corresponding continuous rates of change, so that these techniques are ideal methods in the study of the process of nonlinear supratransmission.

As applications, we obtained diagrams of driving amplitude versus driving frequency, in order to establish the critical amplitude at which supratransmission starts. This study was carried out in the presence of the parameters under study, that is, internal and external damping, mass and generalized Josephson current. Our qualitative results are essentially similar to those obtained for the corresponding analysis of the Dirichlet case, corroborating thus the validity of our technique.

Of course, several directions of further research still remain open. For instance, the design of computational techniques that employ symplectic methods in the analysis of the transmission of energy in discrete arrays of Josephson junctions, is a topic that merits close attention. Particularly, one may use numerical methods that split the vector field $y ^\prime$ as the sum of a conservative contribution $f (y)$ --- solved in the interval $[t , t + h / 2]$ using a symplectic method for the sine-Gordon equation \cite{Referee1,Referee2,Referee3} ---, and a linear dissipative part $g (y)$ --- solved exactly in $[t , t + h]$ with an exponential integrator, for instance). The symplectic dynamics may be solved then for the next half step, obtaining thus a second-order, splitting method that is linearly implicit, and symplectic in the unperturbed case.

\subsection*{Acknowledgments}

One of us (JEMD) wishes to express his deepest gratitude to Dr. F. J. \'{A}lvarez Rodr\'{\i}\-guez, dean of the Faculty of Science of the Universidad Aut\'{o}noma de Aguascalientes, M\'{e}xico, and to Dr. F. J. Avelar Gonz\'{a}lez, dean of the Office for Research and Graduate Studies of the same university, for uninterestedly providing the resources to produce this article. The present work represents a set of partial results under research project PIM07-2 at this university, and is dedicated to Patricia Arias Mu\~{n}oz on the occasion of her $38$th academic anniversary.

\end{document}